\numberwithin{equation}{section}
\font\script=rsfs10 at 12pt
\def\eps{\varepsilon}
\def\H{{\mbox{\script H}\,\,}}
\def\F{\mathfrak F}
\def\I{\mathfrak I}
\def\R{\mathbb R}
\def\S{\mathbb S}
\def\N{\mathbb N}
\def\bal{\begin{aligned}}
\def\eal{\end{aligned}}
\def\proofof#1{\begin{proof}[Proof of #1]}
\def\Chi#1{\hbox{{\large $\chi$}{\Large $_{_{#1}}$}}}
\def\freccia#1{\xrightarrow[\ #1]{}}
\def\XXint#1#2#3{{\setbox0=\hbox{$#1{#2#3}{\int}$} \vcenter{\vspace{-1pt}\hbox{$#2#3$}}\kern-.5\wd0}}
\def\Xint#1{\mathchoice {\XXint\displaystyle\textstyle{#1}}{\XXint\textstyle\scriptstyle{#1}}{\XXint\scriptstyle\scriptscriptstyle{#1}}{\XXint\scriptscriptstyle\scriptscriptstyle{#1}}\!\int}
\def\intmed{\Xint{\hbox{---}}}
\newcounter{mt}
\def\maintheorem#1#2#3{\par \medskip \noindent {\bf Theorem~\mref{#1}}~(#2).~{\it #3}\par}
\def\mref#1{\Alph{#1}}
\def\maintheoremdeclaration#1{\stepcounter{mt}\newcounter{#1}\setcounter{#1}{\arabic{mt}}}
\newtheorem{theorem}{Theorem}[section]
\newtheorem{lemma}[theorem]{Lemma}
\newtheorem{prop}[theorem]{Proposition}
\newtheorem{defin}[theorem]{Definition}
\newtheorem{remark}[theorem]{Remark}
\begin{document}

\title{Sharp stability for the Riesz potential}

\author{N. Fusco}
\author{A. Pratelli}

\begin{abstract}
In this paper we show the stability of the ball as maximizer of the Riesz potential among sets of given volume. The stability is proved with sharp exponent $1/2$, and is valid for any dimension $N\geq 2$ and any power $1<\alpha<N$.
\end{abstract}

\maketitle

\section{Introduction}
The celebrated \emph{Riesz inequality} states that for any two positive functions $f,\,g:\R^N\to \R^+$ and any positive, decreasing function $h:\R^+\to \R^+$, one has
\begin{equation}\label{Riesz}
\int_{\R^N}\int_{\R^N} f(z)g(y) h(|y-z|)\,dy\,dz \leq \int_{\R^N}\int_{\R^N} f^*(z) g^*(y) h(|y-z|)\,dy\,dz\,,
\end{equation}
where $f^*$ and $g^*$ are the radially symmetric decreasing rearrangments of $f$ and $g$. In the special case $f=g$, with the additional assumption that $h$ is strictly decreasing, equality holds in~(\ref{Riesz}) if and only if $f=f^*$ up to a translation.

When $f$ and $g$  coincide  with the characteristic function of a set $E\subset\R^N$ of finite measure and $h(t)=t^{\alpha-N}$ for some $0<\alpha<N$, \eqref{Riesz} states that if $E$ has the same volume $\omega_N$ of the  unit ball $B=\{|x|<1\}$, then 
\begin{equation}\label{equa}
\F(E)\leq\F(B)\,,
\end{equation}
where the functional $\F$ is defined as
\begin{equation}\label{defF}
\F(E) = \int_E \int_E \frac 1{|y-x|^{N-\alpha}}\, dy\, dx\,.
\end{equation}
Moreover,  equality holds in \eqref{equa}  if and only if $E$ is a ball of radius $1$. Note that when $N=3$ and $\alpha=2$, up to a multiplicative constant, $\F(E)$ is the electrostatic energy of a uniform distributions of charges in $E$. Therefore  \eqref{equa} is easily explained by observing   that symmetrization reduces the distance between the charges, thus increasing the electrostatic repulsion between them.

Here we show the stability of the Riesz type inequality \eqref{equa}, i.e., we prove that the energy deficit $D(E)$ of the set $E$ controls a suitable distance $\delta(E)$ from $E$ to an optimal unit ball $B_x$ with center $x\in\R^N$. Precisely, setting
\begin{align*}
D(E)=\F(B)-\F(E)\,, && \delta(E) = \inf_{x\in \R^N} |B_x\Delta E|\,,
\end{align*}
where $\Delta$ denotes the symmetric difference between sets,  we show the following quantitative estimate.

\maintheorem{Main}{Sharp quantitative estimate}{Let $N\geq 2$ and $1<\alpha<N$ be given. There exists a constant $C=C(N,\alpha)$ such that, for every measurable set $E\subset\R^N$ with $|E|=\omega_N$, 
\begin{equation}\label{sharpest}
\delta(E)\leq C(N,\alpha) \sqrt{D(E)}\,.
\end{equation}}

Estimate \eqref{sharpest} was already obtained by Burchard and Chambers in \cite{BC} in the special case of  the Coulomb energy, that is when $N=3$ and $\alpha=2$. Beside, they observe that the  square root  on the right hand side is sharp, in the sense that the exponent $1/2$ cannot be replaced by any larger one. In the same paper they also prove that if $N>3$ and  $\alpha=2$, a similar inequality   holds with the exponent $1/2$ replaced by the not sharp one $1/(N+2)$. Their approach is based on a symmetrization lemma similar to the  one proved in \cite[Th. 2.1]{FMP} which allows them to reduce the proof of \eqref{sharpest} to the case of a set $E$, symmetric with respect to $N$ orthogonal hyperplanes. 

Our proof follows a different path. As for other stability estimates, such as the ones concerning the isoperimetric  and  the Faber-Krahn inequality, see \cite{CL, BDV}, the starting point here is  a Fuglede-type estimate.  More precisely, we  show with a second variation argument that
\begin{equation}\label{intro1} 
\delta(E)\leq |E\Delta B|\leq C(N,\alpha)\sqrt{D(E)}\,,
\end{equation}
whenever $E$ is \emph{nearly spherical}, that is $|E|=|B|$, $E$ has barycenter at  the origin and its boundary can be written as a graph of a function $u$ on  $\partial B$ with $\|u\|_{L^\infty(\partial B)}\ll 1$, see Proposition~\ref{fuglede}.

This first step is relatively easy. The difficult task is to show that one can always reduce the general case to the one of a nearly spherical set.
More precisely in Proposition 2.1 we show that, given a set $E$, either \eqref{sharpest} is true with a suitable constant or we can find a nearly spherical set $\widetilde E$ such that
\begin{align}\label{intro2}
D(\widetilde E)\leq 2D(E)\,, && \widetilde E\Delta B| \geq \frac{\delta(E)}6\,.
\end{align}
At this point \eqref{sharpest} follows at once by combining these two inequalities with the estimate \eqref{intro1} for the nearly spherical set $\widetilde E$. 

Note that in proving the reduction to the nearly spherical case we cannot use a regularity argument such as the one introduced by Cicalese and Leonardi in \cite{CL} to prove the stability of the isoperimetric inequality, see also \cite{AFM, BDV}. In fact no a priori regularity information can be hoped for the local minimizers of the functional $\F(E)$ whose Euler-Lagrange equation is not  even a differential equation.  Instead, the proof of \eqref{intro2} is obtained by a delicate combination of rearrangement  and mass transportation arguments and uses in a crucial way that $\alpha>1$. However, this is not just a technical assumption. Indeed there is a substantial difference between the case $\alpha>1$, which corresponds to a ``long-range'' interaction, and the ``short-range'' interaction case $\alpha\leq1$. Our impression is that the latter case will  require new ideas and a different approach.

\section{Reduction to a nearly spherical set\label{two}}

The goal of this section is to show that, in order to prove Theorem~\mref{Main}, one can reduce himself to a set whose boundary is the graph of a uniformly small function over the boundary of a unit ball. Such sets will be called nearly spherical sets, see Definition~\ref{def:nss}. More precisely, we will devote the section to show the next result.

\begin{prop}\label{mainreduction}
For every $\eps>0$ there exists a constant $K=K(N,\,\alpha,\,\eps)$ such that, for every $E\subseteq\R^N$ with $|E|=\omega_N$, either~(\ref{sharpest}) holds true for $E$, or there is an $\eps$-nearly spherical set $\widetilde E$ around $B$ satisfying
\begin{align}\label{standineq}
D(\widetilde E)\leq 2D(E)\,, && |\widetilde E\Delta B| \geq \frac{\delta(E)}6\,,
\end{align}
and such that the barycenter of $\widetilde E$ is at the origin.
\end{prop}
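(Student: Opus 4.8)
The plan is to perform a sequence of modifications on $E$, each controlled both in terms of the energy deficit $D$ and of the asymmetry $\delta$, ending up either with the estimate~(\ref{sharpest}) directly or with a nearly spherical set. First I would dispose of the trivial regimes: if $\delta(E)$ is already comparable to a small absolute constant (depending only on $N,\alpha$), then since $D(E)$ is bounded below by a positive constant on such sets — here one uses that $\F(B)>\F(E)$ with a quantitative gap whenever $E$ is far from every ball, a compactness-plus-strict-monotonicity fact coming from the equality case of~(\ref{equa}) — inequality~(\ref{sharpest}) holds after enlarging $C$. So one may assume $\delta(E)$ is as small as we wish, and in particular, up to translating, that $|E\Delta B|$ is small.

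Next I would reduce to a bounded, indeed essentially radially-controlled, set by a truncation/rearrangement argument: replacing $E$ by a set $E'$ which agrees with $B$ far from the origin and outside a large ball is inside it, one can arrange $|E'\Delta B|\gtrsim \delta(E)$ while $D(E')\le D(E)$ (removing mass far out and adding it near the bulk only decreases the double integral defect, using that $h(t)=t^{\alpha-N}$ is decreasing and $\alpha>1$ makes the relevant tail terms negligible). This is the step where the hypothesis $\alpha>1$ is genuinely needed, and I expect it — together with the following one — to be the main obstacle: making the transportation estimate quantitative, i.e. controlling the change of $\F$ under moving a controlled amount of mass from a controlled region to another, requires careful use of the long-range decay of the kernel and is not merely a soft monotonicity statement.

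The heart of the argument is then to upgrade ``$|E\Delta B|$ small and $E$ bounded'' to ``$\partial E$ is a small Lipschitz graph over $\partial B$.'' Here I would argue by contradiction/compactness combined with the Euler–Lagrange relation for the constrained problem: a set that nearly maximizes $\F$ under a volume constraint satisfies that its potential $v_E(x)=\int_E|x-y|^{\alpha-N}\,dy$ is, up to a small error, constant on $\partial E$ and $\ge$ that constant inside, $\le$ it outside; since $v_E$ is Hölder continuous (with modulus depending only on $N,\alpha$) and, for $E$ close to $B$ in $L^1$, close to $v_B$ which has nondegenerate radial gradient across $\partial B$, the superlevel set $\{v_E>\text{const}\}$ must be a graph over $\partial B$ with small Lipschitz norm. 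Then a density/relative-isoperimetric argument forces $E$ itself to essentially coincide with that graph domain, giving an $\eps$-nearly spherical set $\widehat E$ with $D(\widehat E)\le D(E)$ and $|\widehat E\Delta B|\ge \tfrac{1}{3}\delta(E)$, say.

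Finally I would fix the two remaining bookkeeping constraints. The volume constraint $|\widehat E|=\omega_N$ is restored by a small dilation, which changes $D$ and $|\widehat E\Delta B|$ by a multiplicative factor arbitrarily close to $1$; and the barycenter is moved to the origin by a translation, which leaves $D$ unchanged and, since the competitor ball in $\delta$ is optimized over all centers, can only decrease the relevant symmetric difference by a controlled amount. Choosing all the small parameters appropriately, the accumulated constants fit inside the factor $2$ in $D(\widetilde E)\le 2D(E)$ and the factor $1/6$ in $|\widetilde E\Delta B|\ge \delta(E)/6$, and the produced set is $\eps$-nearly spherical with barycenter at the origin, which is exactly the claim.
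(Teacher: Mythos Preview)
Your proposal has a genuine gap at the ``heart of the argument'' step, and it is exactly the gap the paper warns about in the introduction. You want to extract that $\partial E$ is a small graph over $\partial B$ from the near-constancy of the potential $v_E$ on $\partial E$, invoking an Euler--Lagrange relation ``up to a small error''. But $E$ is an \emph{arbitrary} measurable set with small deficit, not a maximizer or almost-maximizer in any variational sense; nothing forces $v_E$ to be nearly constant on $\partial E$. Concretely, take $E=B$ modified by removing a scattered set of tiny balls of total measure $\eta$ and adding back a set of measure $\eta$ just outside $\partial B$: the deficit is $O(\eta)$, yet $\partial E$ is wild and $v_E$ is far from constant on it. The authors say this explicitly: ``no a priori regularity information can be hoped for the local minimizers of the functional $\F(E)$ whose Euler--Lagrange equation is not even a differential equation.'' The paper's substitute for your regularity step is entirely constructive: first (Lemma~\ref{firstreduction}) a truncation forcing $B(1-\eps^2)\subset E'\subset B(1+\eps^2)$; then (Lemma~\ref{prelem}) a \emph{radial rearrangement} pushing, direction by direction, the mass of $E'\setminus B$ down to the sphere and the holes of $B\setminus E'$ up to the sphere, controlling $\F$ via an explicit transport map and the estimate of Lemma~\ref{lessgeneric} (this is where $\alpha>1$ enters); finally (Proposition~\ref{defex}) a piecewise-constant approximation and a ``folding'' that merges the inner and outer shells into a single graph $u:\S^{N-1}\to(-\eps,\eps)$. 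Each step comes with a quantitative either/or: either~(\ref{sharpest}) already holds, or the deficit at most doubles and the asymmetry drops by at most a fixed factor.

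Your final bookkeeping step also does not quite work as stated. Translating $\widehat E$ to put its barycenter at the origin makes it nearly spherical around the \emph{translated} ball, not around $B$; you would need an a priori bound $|b|\ll\eps$ on the barycenter, which you have not established. The paper avoids this by a topological fixed-point argument: for each admissible center $z$ one builds a nearly spherical set $E_z$ around $B_z$ whose barycenter $Bar(z)$ depends continuously on $z$ (Remark~\ref{remcont}), shows $(Bar(z)-z)\cdot z<0$ on a small sphere (Lemma~\ref{direction}), and then applies a Brouwer-type lemma (Lemma~\ref{ramiro}) to locate $z$ with $Bar(z)=z$.
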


\subsection{Few facts about mass transportation}

In this paper we will use some very basic tools about mass transportation. Actually, all we need is only the definition of tranport map in a specific case, and a widely known existence property, and everything is contained in the next few lines. A reader who wish to know more about mass transportation can refer, for instance, to the book~\cite{V}.\par

Let $f,\,g:\R^N\to \R^+$ be two Borel functions such that $\int_{\R^N} f = \int_{\R^N} g <+\infty$. A \emph{transport map between $f$ and $g$} is any Borel function $\Phi:\R^N\to\R^N$ such that $\Phi_\# f = g$, that is, for every continuous, positive function $\varphi:\R^N\to\R^+$ one has
\[
\int_{\R^N} \varphi(z)g(z)\, dz = \int_{\R^N} \varphi(\Phi(y)) f(y)\,dy\,.
\]
If there exist a Borel function $\Phi^{-1}:\R^N\to\R^N$ such that $\Phi(\Phi^{-1}(z))=z$ for almost every $z$ such that $g(z)>0$, and $\Phi^{-1}(\Phi(y))=y$ for almost every $y$ such that $f(y)>0$, and the map $\Phi^{-1}$ is a transport map between $g$ and $f$, then we say that \emph{$\Phi$ is an invertible transport map between $f$ and $g$}.\par
In the particular case when $f$ and $g$ are two characteristic functions, that is, if $f=\Chi{H}$ and $g=\Chi{K}$ for two sets $H,\,K\subseteq\R^N$ of equal measure, a transport map $\Phi$ between $f$ and $g$ will also be called directly \emph{transport map between $H$ and $K$}. In this case, the above equality reads as
\begin{equation}\label{transportmap}
\int_K \varphi(z)\, dz = \int_H \varphi(\Phi(y)) \,dy\,.
\end{equation}
Notice that $\det(D\Phi)\equiv 1$ for every (regular enough) invertible transport map between two sets.\par
A useful property is that invertible transport maps always exist, in this setting. In other words, given any $f,\,g$ as above, there always exists at least an invertible transport map, see for instance~\cite[Theorem~6.2]{A}.

\subsection{Notations and preliminary estimates}

In this section we present few notations and a couple of simple but useful estimates. Here, as in the rest of the paper, $1<\alpha<N$ is a fixed constant. First of all, for every $x\in\R^N,\, r>0$, we denote by $B_x(r)$ the open ball with center in $x$ and radius $r$, and we set also $B_x=B_x(1)$, $B(r)=B_0(r)$, $B=B_0(1)$. We will also write, for any two Borel sets $G,\, H\subseteq\R^N$,
\begin{equation}\label{defIGH}
\I(G,H)= \int_G \int_H \frac 1{|y-x|^{N-\alpha}}\, dy\, dx\,,
\end{equation}
so that $\F(E)=\I(E,E)$. Moreover, for every $t>0$, we set
\begin{equation}\label{defpsi}
\psi(t) = \int_B \frac 1{|y-x|^{N-\alpha}}\,dx\,,
\end{equation}
where $y$ is any point such that $|y|=t$. Notice that $\psi:[0,+\infty)\to (0,+\infty)$ is a strictly decreasing, ${\rm C}^1$ function.\par

We now define the nearly spherical sets. Notice that this term has been used several times, with slightly different meanings. In particular, for our purposes we call nearly spherical sets those whose boundary is the graph of a function over the unit sphere, and this function is only required to be uniformly small. In other papers, the same function is required to be small in some stronger sense, for instance in ${\rm C}^1$.

\begin{defin}\label{def:nss}
A set $E_z\subseteq\R^N$ with $|E_z|=\omega_N$ is said an \emph{$\eps$-nearly spherical set around $B_z$}, for some $z\in\R^N$ and some $0<\eps<1$, if there exists a measurable function $u:\partial B \to (-\eps,\eps)$ such that
\begin{equation}\label{eq:defnss}
E_z= \big\{ z+(1+\rho)x:\, x\in \partial B,\, -1\leq \rho\leq u(x)\big\}\,.
\end{equation}
\end{defin}

We see now a simple consequence of Riesz inequality~(\ref{Riesz}).
\begin{lemma}\label{lemnew}
For any positive, measurable function $g:\R^N\to\R^+$, one has
\begin{equation}\label{rieszleft}
\int_{\R^N} \frac{g(y)}{|y|^{N-\alpha}} \,dy \leq \int_{\R^N} \frac{g^*(y)}{|y|^{N-\alpha}}\,dy\,.
\end{equation}
In particular, for any Borel set $H\subseteq\R^N$ and any point $x\in\R^N$, we have
\begin{equation}\label{eqstar}
\int_H \frac 1{|y-x|^{N-\alpha}}\,dy\leq \int_{B_x(r)} \frac 1{|y-x|^{N-\alpha}}\,dy\,,
\end{equation}
where $r=(|H|/\omega_N)^{1/N}$.
\end{lemma}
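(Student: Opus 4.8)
The plan is to deduce \eqref{rieszleft} from the Riesz inequality \eqref{Riesz} in the degenerate case where one of the two competing functions collapses to a point mass at the origin, and then to read off \eqref{eqstar} by a change of variables. Concretely, I would fix $\varrho>0$ and apply \eqref{Riesz} with $f=\Chi{B(\varrho)}$, with $g$ the given function, and with the kernel $h(t)=t^{\alpha-N}$, which is positive and strictly decreasing on $(0,+\infty)$. Since $B(\varrho)$ is a ball centered at the origin we have $f^*=f$, and after dividing both sides by $|B(\varrho)|=\omega_N\varrho^N$ and using Tonelli's theorem to exchange the order of integration, \eqref{Riesz} becomes
\begin{equation*}
\int_{\R^N} g(y)\,\phi_\varrho(y)\,dy \leq \int_{\R^N} g^*(y)\,\phi_\varrho(y)\,dy\,,\qquad\text{where}\quad \phi_\varrho(y):=\intmed_{B(\varrho)}\frac{dz}{|y-z|^{N-\alpha}}\,.
\end{equation*}

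Next I would let $\varrho\to 0^+$. On the left-hand side, Fatou's lemma combined with the pointwise convergence $\phi_\varrho(y)\to|y|^{\alpha-N}$ (valid for every $y\neq0$, since $z\mapsto|y-z|^{\alpha-N}$ is continuous at $z=0$) gives $\int_{\R^N}g(y)|y|^{\alpha-N}\,dy\leq\liminf_{\varrho\to0^+}\int_{\R^N}g(y)\,\phi_\varrho(y)\,dy$. On the right-hand side the key point is the uniform bound $\phi_\varrho(y)\leq C(N,\alpha)\,|y|^{\alpha-N}$, valid for every $\varrho>0$ and every $y\neq0$: indeed, if $|y|\geq2\varrho$ then $|y-z|\geq|y|/2$ throughout $B(\varrho)$, while if $|y|<2\varrho$ then $B(\varrho)\subseteq B_y(3\varrho)$ and one combines the explicit value of $\int_{B_y(3\varrho)}|y-z|^{\alpha-N}\,dz$ (finite precisely because $\alpha-N>-N$) with $\varrho>|y|/2$. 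With this integrable majorant, dominated convergence yields $\int_{\R^N}g^*(y)\,\phi_\varrho(y)\,dy\to\int_{\R^N}g^*(y)|y|^{\alpha-N}\,dy$ whenever the latter integral is finite, while if it is infinite \eqref{rieszleft} is trivial. Letting $\varrho\to0^+$ in the displayed inequality then gives \eqref{rieszleft}.

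To obtain \eqref{eqstar} I would simply apply \eqref{rieszleft} to $g=\Chi{H-x}$: since $|H-x|=\omega_N r^N$ with $r=(|H|/\omega_N)^{1/N}$, the radially decreasing rearrangement of this characteristic function is $\Chi{B(r)}$, and carrying out the translation $w=y-x$ in both integrals of \eqref{rieszleft} produces precisely \eqref{eqstar}.

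The only step that is not purely formal is the passage to the limit, i.e.\ the justification that $\Chi{B(\varrho)}/|B(\varrho)|$ behaves like a unit mass concentrated at the origin; this is handled entirely through the bound on $\phi_\varrho$ above, which is also exactly where the local integrability of the kernel (equivalently, the restriction $N-\alpha<N$) enters. If one wishes to avoid \eqref{Riesz} altogether, an alternative is to prove \eqref{eqstar} directly from the layer-cake identity $|y-x|^{\alpha-N}=(N-\alpha)\int_0^\infty\Chi{B_x(\tau)}(y)\,\tau^{\alpha-N-1}\,d\tau$ together with the trivial bound $|H\cap B_x(\tau)|\leq\min\{|H|,|B_x(\tau)|\}=|B_x(r)\cap B_x(\tau)|$, and then to recover \eqref{rieszleft} by slicing $g=\int_0^\infty\Chi{\{g>s\}}\,ds$, $g^*=\int_0^\infty\Chi{\{g>s\}^*}\,ds$ and integrating the resulting inequalities over $s>0$.
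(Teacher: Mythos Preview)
Your proof is correct and follows essentially the same route as the paper: apply Riesz with $f$ the normalized indicator of a shrinking ball, then pass to the limit $\varrho\to0$; the derivation of~\eqref{eqstar} from~\eqref{rieszleft} is likewise identical. The only difference is in the bookkeeping of the limit---the paper first reduces to bounded $g$ and splits the integral into a small ball near the origin and its complement, whereas you use Fatou on the left and your uniform majorant $\phi_\varrho(y)\le C|y|^{\alpha-N}$ plus dominated convergence on the right, which is a clean and arguably simpler way to organize the same passage.
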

\begin{proof}
Let the positive, measurable function $g:\R^N\to\R^+$ be given. First of all we observe that, by the Monotone Convergence Theorem, to get~(\ref{rieszleft}) it is enough to consider the case when $g$ is bounded. Assume then that $g\leq C$, and let $\eps>0$ be a constant. Applying Riesz inequality~(\ref{Riesz}) with
\begin{align*}
f(z) = \frac 1{\omega_N\eps^N}\,\Chi{B(\eps)}(z)\,, && h(t)=\frac 1{t^{N-\alpha}}\,,
\end{align*}
we get
\begin{equation}\label{ucp}
\intmed_{B(\eps)} \int_{\R^N} \frac {g(y)}{|y-z|^{N-\alpha}}\, dy\,dz \leq
\intmed_{B(\eps)} \int_{B(r)} \frac {g^*(y)}{|y-z|^{N-\alpha}}\, dy\,dz\,.
\end{equation}
Let now $\delta>0$ be fixed, and notice that
\begin{equation}\label{stupest}\begin{split}
\intmed_{B(\eps)} \int_{B(\delta)} \frac {g(y)}{|y-z|^{N-\alpha}}\, dy\,dz &\leq C\, \intmed_{B(\eps)} \int_{B(\delta)} \frac 1{|y-z|^{N-\alpha}}\, dy\,dz \leq C \int_{B(\delta+\eps)} \frac 1{|y|^{N-\alpha}}\,dy \\
&\leq C'(\eps+\delta)^\alpha\,.
\end{split}\end{equation}
Since by the Dominated Convergence Theorem one has
\[
\lim_{\eps\to 0} \intmed_{B(\eps)} \int_{\R^N\setminus B(\delta)} \frac {g(y)}{|y-z|^{N-\alpha}}\, dy\,dz = \int_{\R^N\setminus B(\delta)} \frac{g(y)}{|y|^{N-\alpha}}\,dy\,,
\]
we deduce by~(\ref{stupest}) that
\[
\lim_{\eps\to 0} \intmed_{B(\eps)} \int_{\R^N} \frac {g(y)}{|y-z|^{N-\alpha}}\, dy\,dz = \int_{\R^N} \frac{g(y)}{|y|^{N-\alpha}}\,dy\,.
\]
Inserting this equality, and the corresponding one with $g$ replaced by $g^*$, into~(\ref{ucp}), we get~(\ref{rieszleft}).\par
Let now the Borel set $H\subseteq\R^N$ and the point $x\in\R^N$ be given. Applying~(\ref{rieszleft}) with $g(y)=\Chi H(x+y)$, we get $(y+x=Y$)
\[
\int_H \frac 1{|y-x|^{N-\alpha}} \,dy
=\int_{\R^N} \frac{g(y)}{|y|^{N-\alpha}} \,dy \leq \int_{\R^N} \frac{g^*(y)}{|y|^{N-\alpha}}\,dy
=\int_{B(r)} \frac 1{|y|^{N-\alpha}}\,dy
= \int_{B_x(r)} \frac 1{|y-x|^{N-\alpha}}\,dy\,,
\]
that is~(\ref{eqstar}).
\end{proof}

\begin{lemma}\label{generic}
There exists a continuous, increasing function $\tau_1:\R^+\to\R^+$, depending only on $N$ and on $\alpha$, such that $\tau_1(0)=0$ and for any two Borel sets $G,\, H\subseteq \R^N$ one has
\[
\I(G,H)\leq |G| \tau_1(|H|)\,.
\]
\end{lemma}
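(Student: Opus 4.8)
The plan is to prove the pointwise kernel bound by bounding the inner integral uniformly over $x$ using the rearrangement estimate~\eqref{eqstar} from Lemma~\ref{lemnew}, and then integrating in $x$ over $G$. First I would fix a point $x\in\R^N$ and apply~\eqref{eqstar} with the set $H$: this gives
\[
\int_H \frac{1}{|y-x|^{N-\alpha}}\,dy \leq \int_{B_x(r)} \frac{1}{|y-x|^{N-\alpha}}\,dy = \int_{B(r)} \frac{1}{|y|^{N-\alpha}}\,dy,
\]
where $r=(|H|/\omega_N)^{1/N}$. The right-hand side no longer depends on $x$, and it can be computed in polar coordinates: it equals $N\omega_N \int_0^r s^{\alpha-1}\,ds = N\omega_N r^\alpha/\alpha = (N\omega_N/\alpha)(|H|/\omega_N)^{\alpha/N}$.

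Next I would simply integrate this uniform bound over $x\in G$, obtaining
\[
\I(G,H) = \int_G \int_H \frac{1}{|y-x|^{N-\alpha}}\,dy\,dx \leq |G|\cdot \frac{N\omega_N}{\alpha}\Big(\frac{|H|}{\omega_N}\Big)^{\alpha/N}.
\]
So it suffices to set $\tau_1(t) = (N\omega_N/\alpha)(t/\omega_N)^{\alpha/N}$, which is manifestly continuous, increasing on $\R^+$, vanishes at $t=0$ (since $\alpha>0$), and depends only on $N$ and $\alpha$.

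There is essentially no obstacle here: the only point requiring a word of care is that the inner integral $\int_H |y-x|^{\alpha-N}\,dy$ is finite and the double integral is well-defined, which follows because $N-\alpha<N$ makes $|y-x|^{\alpha-N}$ locally integrable, and the comparison in~\eqref{eqstar} shows the inner integral is bounded by a finite constant depending only on $|H|$; Tonelli's theorem then justifies writing $\I(G,H)$ as the iterated integral. One could alternatively symmetrize in the other variable as well to get a slightly different (and for large sets better) bound, but the one-sided estimate above is all that is claimed and is what the later arguments will use.
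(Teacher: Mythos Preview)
Your proof is correct and follows essentially the same route as the paper: apply the rearrangement bound~\eqref{eqstar} to the inner integral, compute the resulting radial integral explicitly to obtain $\tau_1(|H|)=\frac{N\omega_N^{1-\alpha/N}}{\alpha}|H|^{\alpha/N}$, and then integrate over $x\in G$. The extra remarks on integrability and Tonelli are fine but not needed for the paper's purposes.
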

\begin{proof}
Let $x$ be any point of $G$, and let $r= (|H|/\omega_N)^{1/N}$. By~(\ref{eqstar}) one has
\[\begin{split}
\int_H \frac 1{|y-x|^{N-\alpha}}\, dy &\leq \int_{B_x(r)} \frac 1{|y-x|^{N-\alpha}}\, dy
= N\omega_N \int_0^r \frac 1{\rho^{N-\alpha}}\, \rho^{N-1}\,d\rho
=\frac{N\omega_N}\alpha\, r^\alpha\\
&= \frac{N\omega_N^{1-\frac\alpha N}}\alpha\, |H|^{\frac\alpha N}=:\tau_1(|H|)\,.
\end{split}\]
By integration over $x\in G$, we immediately get the thesis.
\end{proof}

\begin{lemma}\label{lessgeneric}
There exists a continuous, increasing function $\tau_2:\R^+\to\R^+$, depending only on $N$ and on $\alpha$, such that $\tau_2(0)=0$ and the following holds. For any three Borel sets $G,\, H,\,K\subseteq \R^N$ with $|H|=|K|$ and for any invertible transport map $\Phi$ between $H$ and $K$, one has
\begin{equation}\label{stima2}
\big|\I(G,H)-\I(G,K)\big| \leq \tau_2(|G|) \int_H 1\wedge |y-\Phi(y)|\, dy\,.
\end{equation}
\end{lemma}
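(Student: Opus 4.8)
The plan is to use the transport map $\Phi$ to rewrite $\I(G,K)$ as an integral over $H$, and then to reduce everything to a one‑point estimate in the transported variable. Set $h(w):=|w|^{\alpha-N}$. For a fixed $x$ the map $z\mapsto h(z-x)$ is positive and is the increasing limit, as $k\to+\infty$, of the bounded continuous positive functions $z\mapsto\min(h(z-x),k)$; applying the defining identity~\eqref{transportmap} to each of these and passing to the limit by the Monotone Convergence Theorem gives $\int_K h(z-x)\,dz=\int_H h(\Phi(y)-x)\,dy$, and integrating in $x\in G$ yields $\I(G,K)=\int_G\int_H h(\Phi(y)-x)\,dy\,dx$. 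Since $\I(G,H)$ and $\I(G,K)$ are finite by Lemma~\ref{generic}, subtracting and using Fubini we obtain $\big|\I(G,H)-\I(G,K)\big|\le\int_H\int_G\big|h(y-x)-h(\Phi(y)-x)\big|\,dx\,dy$. Hence \eqref{stima2} will follow, taking $p=y$, $q=\Phi(y)$ and integrating in $y$, from the \emph{pointwise claim}: for every $p,q\in\R^N$ and every Borel set $G\subseteq\R^N$ one has $\int_G\big|h(x-p)-h(x-q)\big|\,dx\le\tau_2(|G|)\,(1\wedge|p-q|)$ for a suitable continuous increasing function $\tau_2$ with $\tau_2(0)=0$.

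To prove the claim, first suppose $|p-q|\ge 1$, so that $1\wedge|p-q|=1$. By the triangle inequality and the bound $\int_G h(x-p)\,dx\le\tau_1(|G|)$ established inside the proof of Lemma~\ref{generic} (an immediate consequence of~\eqref{eqstar}), we get $\int_G|h(x-p)-h(x-q)|\,dx\le 2\tau_1(|G|)$, which is of the required form. Now suppose $d:=|p-q|<1$, so $1\wedge|p-q|=d$, and split $G$ into the near part $G\cap B_p(2d)$ and the far part $G\setminus B_p(2d)$. On the near part I use the triangle inequality and apply~\eqref{eqstar} to the set $G\cap B_p(2d)$, with center $p$ and then with center $q$: this set has measure at most $\min(|G|,\omega_N(2d)^N)$, so its rearranged radius $r$ satisfies $r\le\min((|G|/\omega_N)^{1/N},2d)$, whence $r^\alpha=r\cdot r^{\alpha-1}\le 2d\,(|G|/\omega_N)^{(\alpha-1)/N}$ — here $\alpha>1$ is used for the first time — and the near part is bounded by $C_1(N,\alpha)\,|G|^{(\alpha-1)/N}\,d$.

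On the far part I use a first‑order estimate on $h$. One has $|\nabla h(w)|=(N-\alpha)|w|^{\alpha-N-1}$, and for $|x-p|\ge 2d$ every point $\xi$ of the segment joining $p$ to $q$ satisfies $|x-\xi|\ge|x-p|-d\ge|x-p|/2$; the mean value theorem then gives $|h(x-p)-h(x-q)|\le(N-\alpha)\,2^{N+1-\alpha}\,d\,|x-p|^{\alpha-N-1}$ for such $x$. The decisive observation is that $|x-p|^{\alpha-N-1}=|x-p|^{-(N-(\alpha-1))}$ is precisely the Riesz kernel with exponent $\alpha-1$, which lies in $(0,N)$ exactly because $1<\alpha<N$; hence~\eqref{eqstar} applies verbatim with $\alpha$ replaced by $\alpha-1$ and yields $\int_{G\setminus B_p(2d)}|x-p|^{\alpha-N-1}\,dx\le\frac{N\omega_N^{1-(\alpha-1)/N}}{\alpha-1}\,|G|^{(\alpha-1)/N}$, so the far part is bounded by $C_2(N,\alpha)\,|G|^{(\alpha-1)/N}\,d$. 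Adding the two pieces, the pointwise claim holds with $\tau_2(m)=2\tau_1(m)+(C_1(N,\alpha)+C_2(N,\alpha))\,m^{(\alpha-1)/N}$, which is continuous, increasing, and satisfies $\tau_2(0)=0$ since both $\alpha/N$ and $(\alpha-1)/N$ are strictly positive — the positivity of the latter being one more manifestation of the hypothesis $\alpha>1$.

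I expect the far‑part estimate to be the only genuinely delicate step: the naive gradient bound produces the kernel $|x-p|^{\alpha-N-1}$, which for $\alpha>1$ is not integrable at infinity, so it cannot be integrated over all of $\R^N$; the remedy is to recognize it as a bona fide Riesz kernel of the admissible exponent $\alpha-1\in(0,N)$ and invoke~\eqref{eqstar}, which simultaneously produces the good factor $|G|^{(\alpha-1)/N}$ and guarantees $\tau_2(0)=0$. Everything else is a routine splitting together with the triangle inequality, and one notes in passing that invertibility of $\Phi$ is not actually needed — only that $\Phi$ transports $H$ onto $K$.
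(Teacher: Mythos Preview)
Your argument is correct. The core mechanism coincides with the paper's: bound the difference of Riesz kernels by a first-order estimate that produces the auxiliary kernel $|y-x|^{\alpha-N-1}$, and control its integral over $G$ by rearrangement, the finiteness resting precisely on $\alpha>1$. The packaging, however, is different. The paper first invokes invertibility of $\Phi$ to assume $\I(G,H)\ge\I(G,K)$ and drop the absolute value, then condenses the whole case analysis into a single pointwise inequality
\[
\frac{1}{|y-x|^{N-\alpha}}-\frac{1}{|z-x|^{N-\alpha}}\le(N-\alpha+1)\,\frac{1\wedge|y-z|}{|y-x|^{N-\alpha+1}\wedge|y-x|^{N-\alpha}}\,,
\]
after which a single application of~\eqref{rieszleft} to the radially decreasing function $|x|^{-(N-\alpha+1)}\wedge|x|^{-(N-\alpha)}$ finishes the job. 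You instead bound $|h(x-p)-h(x-q)|$ directly, split into $|p-q|\ge1$ (triangle inequality plus Lemma~\ref{generic}) and $|p-q|<1$ (near/far decomposition of $G$), and apply~\eqref{eqstar} separately in each piece---once with exponent $\alpha$ on the near ball and once with exponent $\alpha-1$ on the far part. Your route is a bit longer but arguably more transparent, it makes explicit a second use of $\alpha>1$ in the near-part bound $r^\alpha\le 2d\,(|G|/\omega_N)^{(\alpha-1)/N}$, and as you correctly note it dispenses with the invertibility hypothesis on $\Phi$, which the paper uses only for the one-sided reduction.
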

\begin{proof}
Since $\Phi$ is an invertible transport map, by the symmetry of the problem we can assume without loss of generality that $\I(G,H)\geq \I(G,K)$. Indeed, by~(\ref{transportmap}) we have
\[
\int_K 1\wedge |z-\Phi^{-1}(z)|\, dz=\int_H 1\wedge |\Phi(y)-y|\, dy\,.
\]
Let us fix three points $x,\,y,\,z\in\R^N$. We start by establishing that
\begin{equation}\label{horrin}
\frac 1{|y-x|^{N-\alpha}}-\frac 1{|z-x|^{N-\alpha}} \leq (N-\alpha+1)\, \frac{1 \wedge |y-z|}{|y-x|^{N-\alpha+1}\wedge |y-x|^{N-\alpha}}\,.
\end{equation}
Indeed, if $|y-x|>|z-x|$, then the left hand side of the inequality is negative and the inequality is emptily true. Otherwise, the left hand side is smaller than
\[
(N-\alpha)\, \frac{|y-z|}{|y-x|^{N-\alpha+1}}\,.
\]
If $|y-z|\leq 1$, this gives~(\ref{horrin}). Otherwise, the left hand side of~(\ref{horrin}) is surely smaller than
\[
\frac 1{|y-x|^{N-\alpha}}=\frac{1 \wedge |y-z|}{|y-x|^{N-\alpha}}\leq \frac{1 \wedge |y-z|}{|y-x|^{N-\alpha+1}\wedge |y-x|^{N-\alpha}}
\,,
\]
thus~(\ref{horrin}) is shown.\par
Keeping in mind~(\ref{transportmap}) and the assumption $\I(G,H)\geq \I(G,K)$, by~(\ref{horrin}) we get
\[\begin{split}
|\I(G,H)-\I(G,K)| &= 
\int_G \int_H \frac 1{|y-x|^{N-\alpha}}\, dy\, dx-\int_G \int_K \frac 1{|z-x|^{N-\alpha}}\, dz\, dx\\
&=\int_G \int_H \frac 1{|y-x|^{N-\alpha}}- \frac 1{|\Phi(y)-x|^{N-\alpha}}\, dy\, dx\\
&\leq (N-\alpha+1) \int_G \int_H \frac{1 \wedge |y-\Phi(y)|}{|y-x|^{N-\alpha+1}\wedge |y-x|^{N-\alpha}}\, dy\,dx\\
&= (N-\alpha+1) \int_H 1 \wedge |y-\Phi(y)| \bigg(\int_G \frac 1{|y-x|^{N-\alpha+1}\wedge |y-x|^{N-\alpha}}\,dx\bigg) \, dy\,.
\end{split}\]
Let us now consider the integral in parentheses. Calling $r(G)= (|G|/\omega_N)^{1/N}$ the radius of the ball with the same volume as $|G|$, using the Riesz inequality as in Lemma~\ref{lemnew}, for every $y\in\R^N$ we have
\[
\int_G \frac 1{|y-x|^{N-\alpha+1}\wedge |y-x|^{N-\alpha}}\,dx\leq \int_{B_{r(G)}} \frac 1{|x|^{N-\alpha+1}\wedge |x|^{N-\alpha}}\, dx\,.
\]
Defining $\tau_2(|G|)/(N-\alpha+1)$ the latter integral, which is finite because $\alpha>1$, we conclude~(\ref{stima2}), so the proof is concluded.
\end{proof}

\subsection{Reduction to a small asymmetry}

This section is devoted to reduce ourselves to the case of sets with small asymmetry. In particular, we aim to prove the following continuity result, which is a non-quantitative version of Theorem~\mref{Main}.

\begin{lemma}\label{reduction}
For every $\mu>0$ there exists $\eta=\eta(\mu,\alpha,N)>0$ such that, for every set $E\subseteq\R^N$ with $|E|=\omega_N$ and $\delta(E)\geq \mu$, one has $D(E)\geq\eta$.
\end{lemma}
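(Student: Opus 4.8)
The plan is to argue by contradiction, exploiting the compactness that is implicit in the problem. Suppose the statement fails for some $\mu>0$: then there is a sequence of sets $E_n\subseteq\R^N$ with $|E_n|=\omega_N$, $\delta(E_n)\geq\mu$, and $D(E_n)\to 0$, i.e.\ $\F(E_n)\to\F(B)$. The first step is to control the shape of the $E_n$ using Lemma~\ref{lemnew}. For each $n$, replace $E_n$ by a competitor obtained by moving mass outward: since $\F(E_n)=\I(E_n,E_n)=\int_{E_n}\big(\int_{E_n}|y-x|^{\alpha-N}\,dy\big)\,dx$ and, by~\eqref{eqstar}, the inner integral is bounded by the corresponding integral over a ball of the same volume centered at $x$, one gets that $\F(E_n)$ is close to $\F(B)$ only if $E_n$ is ``concentrated''. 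More precisely, fix a large radius $R$; I would show that $|E_n\setminus B_{z_n}(R)|\to 0$ for a suitable sequence of centers $z_n$ (say the barycenters, or points where the potential $\int_{E_n}|y-x|^{\alpha-N}\,dy$ is nearly maximal), because a set with a positive fraction of its volume spread out at distance $\gtrsim R$ from the bulk has $\F$ strictly bounded away from $\F(B)$ by a quantitative amount depending only on $R$ and the escaping fraction. This gives tightness of the (translated) sets $\Chi{E_n-z_n}$ in $L^1$.

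The second step is to pass to the limit. By tightness and the uniform $L^1$ bound, up to a subsequence $\Chi{E_n-z_n}\to f$ weakly-$*$ in $L^\infty$ and weakly in $L^1_{\rm loc}$ for some $f$ with $0\leq f\leq 1$ and $\int f=\omega_N$. The functional $\I(\cdot,\cdot)$ with the kernel $|y-x|^{\alpha-N}$, $1<\alpha<N$, is continuous under this convergence: the kernel is locally integrable, the tails are uniformly small by the concentration estimate together with Lemma~\ref{generic} (which bounds $\I(G,H)\leq|G|\tau_1(|H|)$ and hence controls the contribution of the escaping parts), and on bounded regions one can use a standard truncation of the singularity plus the weak convergence. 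Hence $\iint f(x)f(y)|y-x|^{\alpha-N}\,dx\,dy=\F(B)$. Now apply the equality case of the Riesz inequality~\eqref{Riesz} with $f=g$ and $h$ strictly decreasing: it forces $f=\Chi{B_x}$ for some $x$, and since $\int f=\omega_N$ and we have already recentered, $f=\Chi{B}$ (after adjusting $z_n$). This says $\Chi{E_n-z_n}\to\Chi{B}$ in $L^1$, so $|E_n\Delta B_{z_n}|\to 0$, whence $\delta(E_n)\leq|E_n\Delta B_{z_n}|\to 0$, contradicting $\delta(E_n)\geq\mu$.

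The main obstacle I expect is the first step, namely the quantitative non-concentration/tightness estimate: one must rule out not only a single piece of $E_n$ escaping to infinity but also the more subtle possibility that $E_n$ splits into two (or more) comparable clusters far apart, or that $E_n$ slowly spreads out without any clean splitting. Ruling out splitting is the usual concentration-compactness dichotomy: if $E_n = E_n^1\cup E_n^2$ with $|E_n^i|\to m_i>0$, $m_1+m_2=\omega_N$, and ${\rm dist}(E_n^1,E_n^2)\to\infty$, then the cross term $\I(E_n^1,E_n^2)\to 0$ while $\F(E_n^i)\leq\F(B^{(i)})$ for balls $B^{(i)}$ of volume $m_i$, and strict superadditivity $\F(B)>\F(B^{(1)})+\F(B^{(2)})$ (which follows from the strict version of the Riesz inequality, or by direct scaling computation using that $\F(B(r))=r^{N+\alpha}\F(B)$ while $m_1^{(N+\alpha)/N}+m_2^{(N+\alpha)/N}<\omega_N^{(N+\alpha)/N}$ since $(N+\alpha)/N>1$) gives the needed contradiction. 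The genuinely spread-out case is handled by a level-set / layer-cake argument on the potential, again using~\eqref{eqstar}. Once tightness is in hand, the rest is soft functional-analytic limiting plus the rigidity in Riesz's inequality.
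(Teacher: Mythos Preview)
Your strategy coincides with the paper's: argue by contradiction with a sequence $E_n$ having $D(E_n)\to 0$, run concentration--compactness (the paper cites Lions's lemma explicitly; you redo the trichotomy by hand, ruling out dichotomy via the scaling $\F(B(r))=r^{N+\alpha}\F(B)$ exactly as the paper does, and ruling out vanishing via an estimate the paper isolates as Lemma~\ref{verybad}), then pass to a weak* limit $f$ with $0\le f\le 1$ and $\int f=\omega_N$, show $\F(f)=\F(B)$ by continuity (the paper's Lemma~\ref{lemven}), and identify $f$ as the indicator of a ball.

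There is one step that fails as written. When you invoke ``the equality case of the Riesz inequality~\eqref{Riesz} with $f=g$'' to conclude $f=\Chi{B_x}$, this is not what Riesz gives you: equality in~\eqref{Riesz} with $f=g$ and strictly decreasing $h$ only forces $f=f^*$ up to translation, i.e.\ that $f$ is radially symmetric and decreasing. It does \emph{not} force $f$ to be a characteristic function---any radial decreasing $0\le f\le 1$ already satisfies $f=f^*$, and the weak* limit of characteristic functions need not be one. What is actually needed is the sharper statement $\F(g)\le\F\big(B(\sqrt[N]{\|g\|_{L^1}/\omega_N})\big)$ for every measurable $g:\R^N\to[0,1]$, with equality precisely when $g$ is the characteristic function of a ball. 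The paper proves this separately (the right inequality in~\eqref{riesz}, established via a bathtub-type comparison $\I(f,\hat\theta)\ge\I(f,\theta)$ between a radial $\theta$ and its truncation $\hat\theta=\Chi{B(r)}$). Without this extra ingredient you cannot exclude a smeared-out radial limiting profile, and the contradiction does not close.
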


In order to prove this result, we start with the following rough estimate, which basically says that a very sparse set cannot have a small energy deficit.

\begin{lemma}\label{verybad}
There exists a constant $\xi=\xi(\alpha,N)>0$ such that every set $E\subseteq\R^N$ with $|E|=\omega_N$ and $\delta(E)\geq 2(\omega_N-\xi)$ satisfies
\begin{equation}\label{tvo}
D(E)>\frac{\omega_N^2}{5^N}\, \bigg(1-\frac 1{2^{N-\alpha}}\bigg)\,.
\end{equation}
\end{lemma}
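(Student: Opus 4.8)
The plan is to show that the sparseness hypothesis forces $\F(E)$ to exceed $2^{\alpha-N}\omega_N^2$ by an amount that is arbitrarily small as $\xi\to0$, while $\F(B)$ exceeds $2^{\alpha-N}\omega_N^2$ by \emph{strictly more} than $\frac{\omega_N^2}{5^N}(1-2^{\alpha-N})$; comparing the two and choosing $\xi$ small then gives \eqref{tvo}. First I would rewrite the hypothesis: since $|B_z\Delta E|=2\omega_N-2|B_z\cap E|$, the condition $\delta(E)\ge2(\omega_N-\xi)$ is equivalent to $|E\cap B_z|\le\xi$ for every $z\in\R^N$; covering $B_x(2)$ by a purely dimensional number $C_N$ of unit balls, this also yields $|E\cap B_x(2)|\le C_N\xi$ for every $x$.

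For the upper bound on $\F(E)$, fix $x\in E$ and split $\int_E|y-x|^{\alpha-N}\,dy$ into the integrals over $E\cap B_x(2)$ and over $E\setminus B_x(2)$. On the first set, \eqref{eqstar} applied with $H=E\cap B_x(2)$, whose associated radius is $r=(|E\cap B_x(2)|/\omega_N)^{1/N}\le(C_N\xi/\omega_N)^{1/N}$, bounds the integral by $\int_{B_x(r)}|y-x|^{\alpha-N}\,dy=\frac{N\omega_N}{\alpha}r^\alpha\le\frac{N\omega_N}{\alpha}(C_N\xi/\omega_N)^{\alpha/N}=:\sigma(\xi)$, with $\sigma(\xi)\to0$ as $\xi\to0$. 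On the second set $|y-x|\ge2$, so the integrand is at most $2^{\alpha-N}$ and the integral is at most $2^{\alpha-N}|E|=2^{\alpha-N}\omega_N$. Integrating $\sigma(\xi)+2^{\alpha-N}\omega_N$ over $x\in E$ gives $\F(E)\le\omega_N\sigma(\xi)+2^{\alpha-N}\omega_N^2$.

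For the lower bound on $\F(B)$, fix $x\in B$ and split $\int_B|y-x|^{\alpha-N}\,dy$ over $B\cap B_x(1)$ and $B\setminus B_x(1)$: on the first set the integrand is $>1$, while on the second, since $|y-x|<2$ whenever $x,y\in B$, it is $>2^{\alpha-N}$. Therefore $\int_B|y-x|^{\alpha-N}\,dy\ge2^{\alpha-N}\omega_N+(1-2^{\alpha-N})|B\cap B_x(1)|$; integrating over $x\in B$ and using that $B(1/2)\times B(1/2)\subseteq\{(x,y)\in B\times B:|x-y|<1\}$, I get $\F(B)\ge2^{\alpha-N}\omega_N^2+(1-2^{\alpha-N})|B(1/2)|^2=2^{\alpha-N}\omega_N^2+(1-2^{\alpha-N})\frac{\omega_N^2}{4^N}$.

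Combining the two estimates, $D(E)=\F(B)-\F(E)\ge(1-2^{\alpha-N})\frac{\omega_N^2}{4^N}-\omega_N\sigma(\xi)$. Since $1<\alpha<N$ we have $1-2^{\alpha-N}>0$, and since $4^N<5^N$ the number $\gamma:=(1-2^{\alpha-N})\omega_N^2\big(\tfrac{1}{4^N}-\tfrac{1}{5^N}\big)$ is strictly positive and depends only on $\alpha$ and $N$; choosing $\xi=\xi(\alpha,N)>0$ so small that $\omega_N\sigma(\xi)<\gamma$ then yields $D(E)>\frac{\omega_N^2}{5^N}(1-2^{\alpha-N})$, i.e.\ \eqref{tvo}. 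The only step needing a little care is the routine covering bound $|E\cap B_x(2)|\le C_N\xi$; the only genuinely delicate point is the bookkeeping that makes the two halves land on opposite sides of the target constant, which is arranged by splitting at radius $2$ (so that the leading terms $2^{\alpha-N}\omega_N^2$ cancel) and by comparing $\frac{\omega_N^2}{4^N}$ with $\frac{\omega_N^2}{5^N}$.
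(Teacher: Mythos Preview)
Your argument is correct and follows essentially the same route as the paper: translate the asymmetry hypothesis into $|E\cap B_z|\le\xi$ for all unit balls, bound $\F(E)$ from above by splitting the inner integral near and far from $x$ and using~\eqref{eqstar} together with a covering, bound $\F(B)$ from below via the inclusion $B(1/2)\times B(1/2)\subseteq\{|x-y|<1\}$, and compare. The only cosmetic difference is that you split $E$ into two pieces at radius $2$ and apply~\eqref{eqstar} once to $E\cap B_x(2)$, whereas the paper splits into three pieces ($E\cap B_x(1)$, the annulus, and the far part) and handles the first two separately; your version is marginally cleaner but otherwise identical in spirit and in the constants produced.
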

\begin{proof}
We start observing that for every $x\in B(1/2)$ the inclusion $B_x(1/2)\subseteq B$ holds. We can then divide $B\times B$ as the disjoint union $\Gamma_1\cup\Gamma_2$, where every $(x,y)\in B\times B$ belongs to $\Gamma_1$ if $x\in B(1/2)$ and $y\in B_x(1/2)$, and to $\Gamma_2$ otherwise. Since $|y-x|\leq 1$ for every $(x,y)\in\Gamma_1$ and $|y-x|\leq 2$ for every $(x,y)\in\Gamma_2$, we immediately get the (not so precise) estimate
\begin{equation}\label{estiB}
\F(B)= \iint_{\Gamma_1} \frac 1{|y-x|^{N-\alpha}}+ \iint_{\Gamma_2} \frac 1{|y-x|^{N-\alpha}}
\geq |\Gamma_1| + \frac{|\Gamma_2|}{2^{N-\alpha}}
= \frac{\omega_N^2}{2^{N-\alpha}} +\frac{\omega_N^2}{4^N}\bigg(1-\frac 1{2^{N-\alpha}}\bigg)\,.
\end{equation}
Let us now consider a set $E\subseteq\R^N$ with $|E|=\omega_N$, and assume that $\delta(E)\geq 2(\omega_N-\xi)$ for a suitable $\xi$ to be specified later. For every ball $B_z$ of radius $1$ we have
\begin{equation}\label{gac}
|E\cap B_z| = \omega_N - |B_z\setminus E| = \omega_N - \frac{|B_z\Delta E|}2 \leq \omega_N - \frac{\delta(E)}2\leq \xi\,.
\end{equation}
Let $K=K(N)\in\N$ be a constant such that the annulus $B(2)\setminus B(1)$ can be covered with $K$ balls of radius $1$. Fix now any $x\in E$, and subdivide $E=E_1\cup E_2\cup E_3$, where $E_1=E\cap B_x(1)$, $E_2=E\cap (B_x(2)\setminus B_x(1))$, $E_3=E\setminus B_x(2)$. By~(\ref{gac}), $|E_1|\leq \xi$ and $|E_2|\leq K\xi$. By~(\ref{eqstar}),
\[
\int_{E_1} \frac 1{|y-x|^{N-\alpha}} \, dy \leq \int_{B_x((|E_1|/\omega_N)^{1/N})} \frac 1{|y-x|^{N-\alpha}} \, dy
=\frac{N\omega_N^{1-\frac\alpha N}}\alpha\, |E_1|^{\frac\alpha N}
\leq \frac{N\omega_N^{1-\frac\alpha N}}\alpha\, \xi^{\frac\alpha N}\,.
\]
Moreover, by construction
\begin{align*}
\int_{E_2} \frac 1{|y-x|^{N-\alpha}} \, dy \leq |E_2| \leq K\xi
\,, && \int_{E_3} \frac 1{|y-x|^{N-\alpha}} \, dy \leq \frac{|E_3|}{2^{N-\alpha}}
\leq \frac{\omega_N}{2^{N-\alpha}}\,.
\end{align*}
Putting together the above estimates, we get
\[
\int_E \frac 1{|y-x|^{N-\alpha}} \,dy\leq \frac{N\omega_N^{1-\frac\alpha N}}\alpha\, \xi^{\frac\alpha N}+K\xi+\frac{\omega_N}{2^{N-\alpha}}\,,
\]
and since this holds for a generic $x\in E$ we deduce
\[
\F(E) =\int_E \int_E \frac 1{|y-x|^{N-\alpha}} \,dy\,dx \leq \frac{N\omega_N^{2-\frac\alpha N}}\alpha\, \xi^{\frac\alpha N}+K\xi\omega_N+\frac{\omega_N^2}{2^{N-\alpha}}\,.
\]
Comparing this estimate with~(\ref{estiB}), we get
\[
D(E)= \F(B)-\F(E) \geq \frac{\omega_N^2}{4^N}\bigg(1-\frac 1{2^{N-\alpha}}\bigg)-\frac{N\omega_N^{2-\frac\alpha N}}\alpha\, \xi^{\frac\alpha N}-K\xi\omega_N\,,
\]
which proves the validity of~(\ref{tvo}) provided $\xi=\xi(N,\alpha)$ has been chosen small enough.
\end{proof}

We prove now a result concerning the energy of functions, instead of sets. More precisely, with a small abuse of notation, we extend~(\ref{defIGH}) and~(\ref{defF}) to $L^1$ functions $f,\,g:\R^N\to [0,1]$ as follows,
\begin{align*}
\I(f,g)= \int_{\R^N} \int_{\R^N} \frac {f(x)g(y)}{|y-x|^{N-\alpha}}\, dy\, dx\,, && \F(f) = \int_{\R^N} \int_{\R^N} \frac{f(x)f(y)}{|y-x|^{N-\alpha}}\, dy\, dx=\I(f,f)\,.
\end{align*}
Notice that $\F(\Chi{E})=\F(E)$ and $\I(\Chi{G},\Chi{H})=\I(G,H)$. The following estimates hold.
\begin{lemma}
For every $L^1$ function $g:\R^N\to [0,1]$, we have
\begin{align}\label{riesz}
\int_{\R^N} \frac{g(y)}{|y|^{N-\alpha}} \,dy \leq \int_{B(\sqrt[N]{\|g\|_{L^1}/\omega_N})} \frac 1{|y|^{N-\alpha}}\,dy\,, &&
\F(g) \leq \F\big(B\big(\sqrt[N]{\|g\|_{L^1}/\omega_N}\,\big)\big)\,,
\end{align}
and the right inequality is strict unless $g$ is the characteristic function of a ball.
\end{lemma}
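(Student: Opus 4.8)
The plan is to reduce to the radially symmetric case and then exploit an elementary ``bathtub''-type estimate. Set $r=\sqrt[N]{\|g\|_{L^1}/\omega_N}$, so that $|B(r)|=\|g\|_{L^1}$, and let $g^*$ be the symmetric decreasing rearrangement of $g$; note $0\le g^*\le 1$ and $\|g^*\|_{L^1}=\|g\|_{L^1}$. The observation I will use repeatedly is that for every radially symmetric and non-increasing $w:\R^N\to\R^+$,
\[
\int_{\R^N} g^*(y)\, w(y)\, dy\ \le\ \int_{B(r)} w(y)\, dy\,.
\]
Indeed, writing $w(y)=\int_0^\infty \Chi{\{w>s\}}(y)\, ds$ with $\{w>s\}=B(\sigma(s))$ and using Tonelli, this reduces to the pointwise (in $s$) bound $\int_{B(\sigma)} g^*\le\min(\omega_N\sigma^N,\|g^*\|_{L^1})=|B(\sigma)\cap B(r)|=\int_{B(\sigma)}\Chi{B(r)}$, where the first estimate holds because $g^*\le 1$ and the second because of the mass constraint.

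Applying this with $w(y)=|y|^{\alpha-N}$, which is radially non-increasing since $\alpha<N$, together with~\eqref{rieszleft} of Lemma~\ref{lemnew}, immediately gives the left inequality in~\eqref{riesz}. For the right inequality I would first apply the Riesz inequality~\eqref{Riesz} with $h(t)=t^{\alpha-N}$ to get $\F(g)\le\F(g^*)$, and then reduce everything to comparisons against $B(r)$. Since $g^*\le 1$, one has $g^*(x)=\int_0^1 \Chi{\{g^*>t\}}(x)\, dt$, and as $g^*$ is radially decreasing each super-level set $\{g^*>t\}=B(\rho(t))$ is a centred ball; hence, by Tonelli, $\F(g^*)=\int_0^1 \I(B(\rho(t)),g^*)\, dt$. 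For fixed $t$ with $\rho(t)>0$ we have $\I(B(\rho(t)),g^*)=\int_{\R^N}g^*(y)\, V_t(y)\, dy$, where $V_t(y):=\int_{B(\rho(t))}|x-y|^{\alpha-N}\, dx=\rho(t)^\alpha\,\psi(|y|/\rho(t))$ is radially non-increasing by~\eqref{defpsi} and the monotonicity of $\psi$; so the bathtub estimate yields $\I(B(\rho(t)),g^*)\le\int_{B(r)}V_t(y)\, dy=\I(B(r),B(\rho(t)))$ (and this is trivial when $\rho(t)=0$). Integrating in $t$ and re-assembling the layer cake in the second slot, $\int_0^1\I(B(r),B(\rho(t)))\, dt=\I(B(r),g^*)$, whence $\F(g^*)\le\I(B(r),g^*)$. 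A final application of the bathtub estimate to $\I(B(r),g^*)=\int_{\R^N}g^*(y)\, V_{B(r)}(y)\, dy$, with $V_{B(r)}(y):=\int_{B(r)}|x-y|^{\alpha-N}\, dx$ again radially non-increasing, gives $\I(B(r),g^*)\le\F(B(r))$. Chaining, $\F(g)\le\F(g^*)\le\I(B(r),g^*)\le\F(B(r))$.

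For the equality statement, assume $g\not\equiv 0$ and $\F(g)=\F(B(r))$, so that every inequality above is an equality. The equality $\F(g)=\F(g^*)$ forces, by the equality case of~\eqref{Riesz} --- legitimate since $t\mapsto t^{\alpha-N}$ is strictly decreasing --- that $g=g^*$ up to a translation. Equality in the integrated slice estimate forces $\I(B(\rho(t)),g^*)=\I(B(r),B(\rho(t)))$ for a.e.\ $t$; choosing such a $t$ with $\rho(t)>0$, which is possible because $\int_0^1\rho(t)^N\, dt=r^N>0$, and noting that the corresponding weight $V_t$ is then \emph{strictly} radially decreasing, the equality case of the bathtub estimate forces $\int_{B(\sigma)}g^*=|B(\sigma)\cap B(r)|$ for every $\sigma>0$, i.e.\ $g^*=\Chi{B(r)}$. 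Hence $g$ is the characteristic function of a ball of radius $r$; conversely, such $g$ attains equality by the translation invariance of $\F$. The point that requires the most care is precisely this ``double use'' of the bathtub estimate for the bilinear functional $\F$: a single pointwise bound on the Riesz potential of $g^*$ is too lossy, so one copy of $g^*$ has to be dissected along its super-level balls while the other copy is retained and tested against $\Chi{B(r)}$; some extra attention to possibly unbounded supports of $g^*$, and to the degenerate case $g\equiv 0$, is also needed in the equality analysis.
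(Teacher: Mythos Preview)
Your proof is correct and reaches the same chain $\F(g)\le\F(g^*)\le\I(g^*,B(r))\le\F(B(r))$ as the paper, but the middle steps are established differently. The paper proves a general comparison $\I(f,\theta)\le\I(f,\hat\theta)$ for \emph{arbitrary} radially decreasing $f$ and $\theta$ (their claim~\eqref{cicleo}); to do so they introduce the spherical average $\zeta(\rho)$ of the potential of $f$ and show it is decreasing by a clever auxiliary application of the Riesz inequality~\eqref{Riesz}. You instead layer-cake $g^*$ in one slot so that the comparison is only ever needed when one factor is the characteristic function of a ball $B(\rho(t))$; in that case the potential $V_t(y)=\rho(t)^\alpha\psi(|y|/\rho(t))$ is already known to be radially decreasing from the monotonicity of $\psi$, and the elementary bathtub bound does the job. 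Your route is therefore more self-contained (no second appeal to Riesz to get monotonicity of an averaged potential), while the paper's lemma is stated in slightly greater generality. Your equality analysis is also fine: the strict monotonicity of $V_t$ for some $\rho(t)>0$ forces $\int_{B(\sigma)}g^*=|B(\sigma)\cap B(r)|$ for all $\sigma>0$, hence $g^*=\Chi{B(r)}$, exactly as the paper obtains from strict monotonicity of $\zeta$ when $f=\hat f$. Two minor remarks: your bathtub inequality actually holds for $g$ itself (only $0\le g\le1$ and the mass constraint are used), so the detour through Lemma~\ref{lemnew} for the left inequality in~\eqref{riesz} is unnecessary --- this is how the paper argues that part; and the invocation of the equality case of~\eqref{Riesz} is the same one the paper makes, so no new hypothesis is introduced.
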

\begin{proof}
To prove the left inequality it is enough to observe that, since $0\leq g\leq 1$, calling for brevity $r=\sqrt[N]{\|g\|_{L^1}/\omega_N}$ one has
\[\begin{split}
\int_{B(r)} \frac 1{|y|^{N-\alpha}}\,dy&-\int_{\R^N} \frac{g(y)}{|y|^{N-\alpha}} \,dy
=\int_{B(r)} \frac{1-g(y)}{|y|^{N-\alpha}}\,dy - \int_{\R^N\setminus B(r)} \frac{g(y)}{|y|^{N-\alpha}} \,dy\\
&\geq \int_{B(r)} \frac{1-g(y)}{r^{N-\alpha}}\,dy - \int_{\R^N\setminus B(r)} \frac{g(y)}{r^{N-\alpha}} \,dy
=\frac 1{r^{N-\alpha}}\,\bigg(\int_{B(r)} 1\,dy -\int_{\R^N} g(y)\,dy\bigg)\\
&=\frac 1{r^{N-\alpha}}\,\Big(\omega_N r^N -\|g\|_{L^1}\Big) =0\,.
\end{split}\]
Concerning the right inequality, for every function $\theta:\R^N\to [0,1]$ we denote by $\hat \theta:\R^N\to [0,1]$ the function given by $\hat \theta=\Chi{B(r)}$, being $r=\sqrt[N]{\|\theta\|_{L^1}/\omega_N}$. We claim that
\begin{align}\label{cicleo}
\I(f,\hat \theta)\geq \I(f,\theta) && \forall\, f,\,\theta:\R^N\to[0,1],\, f=f^*,\,\theta=\theta^*\,,
\end{align}
with equality only if $\hat \theta=\theta$ in the special case when $\hat f=f$ (the equality holds true only if $\hat\theta=\theta$ even without the assumption $\hat f=f$, but since we do not need this stronger fact will not prove it). Notice that this will readily imply the right inequality in~(\ref{riesz}), since of course $\widehat{g^*}=\hat g$, so applying~(\ref{Riesz}) once and then~(\ref{cicleo}) twice, calling again for brevity $r=\sqrt[N]{\|g\|_{L^1}/\Omega_N})$, we get
\[
\F(g)=\I(g,g)\leq \I(g^*,g^*)\leq \I(g^*,\hat g) \leq \I(\hat g,\hat g) =\F(\hat g) = \F(\Chi{B(r)})=\F(B(r))\,,
\]
which is the desired inequality. Concerning the equality cases, observe that $\F(g)=\F(B(r))$ if and only if $g^*=\hat g=\Chi{B(r)}$, and $\I(g,g)=\I(g^*,g^*)$. As noticed at the beginning, since $h(t)=t^{-(N-\alpha)}$ is strictly decreasing, the latter equality holds if and only if $g=g^*$ up to a translation. Summarizing, equality in the right inequality in~(\ref{riesz}) holds if and only if $g$ is the characteristic function of a ball. Thus, to conclude the proof we only have to establish~(\ref{cicleo}).\par
Let then $f,\,\theta:\R^N\to [0,1]$ be two functions such that $f^*=f$ and $\theta^*=\theta$. For every $\rho>0$, let us define
\[
\zeta(\rho) = \intmed_{\partial B(\rho)} \int_{R^N} \frac{f(z)}{|z-y|^{N-1}}\,dz \,d\H^{N-1}(y)\,.
\]
Let now $\rho_1<\rho_2$ be given, and for every $\eps>0$ let $g:\R^N\to \R^+$ be given by
\[
g= \Chi{B(\rho_1)} + \Chi{B(\rho_2+\eps)}-\Chi{B(\rho_2)}\,,
\]
so that
\begin{equation}\label{forforg}
g^*=\Chi{B(\rho_1+\delta)}=g+\big(\Chi{B(\rho_1+\delta)}-\Chi{B(\rho_1)}\big)-\big(\Chi{B(\rho_2+\eps)}-\Chi{B(\rho_2)}\big)\,,
\end{equation}
with $(\rho_1+\delta)^N = \rho_1^N + (\rho_2+\eps)^N-\rho_2^N$. Applying~(\ref{Riesz}) to $f$ and $g$, as usual with $h(t)=t^{-(N-\alpha)}$, keeping in mind that $f=f^*$ we obtain $\I(f,g)<\I(f,g^*)$. By sending $\eps$ to $0$, formula~(\ref{forforg}) then implies that $\zeta(\rho_1)\geq \zeta(\rho_2)$. That is, the function $\zeta$ is decreasing. In the special case when $f=\hat f$, that is, $f$ is the characteristic function of a ball, the fact that $\zeta$ is strictly decreasing is clear since so is the function $\psi$ defined in~(\ref{defpsi}).\par

To prove~(\ref{cicleo}), we can then call again for brevity $r=\sqrt[N]{\|\theta\|_{L^1}/\omega_N}$ and argue as in the first half of the proof. More precisely, recalling that $\theta=\theta^*$ and that $0\leq \theta\leq 1$, and calling then with a slight abuse of notation $\theta(\rho)=\theta(y)$ for any $|y|=\rho$, we have
\[\begin{split}
\I(f,\hat\theta)-\I(f,\theta)&=\int_0^{+\infty}\int_{\partial B(\rho)}\int_{\R^N}\frac{f(z)(\hat\theta(y)-\theta(y))}{|z-y|^{N-1}}\,dz\, d\H^{N-1}(y)\, d\rho\\
&=\int_0^{+\infty}\zeta(\rho)(\hat\theta(\rho)-\theta(\rho))\,N\omega_N\rho^{N-1} d\rho\\
&=\int_0^r\zeta(\rho)(1-\theta(\rho))\,N\omega_N\rho^{N-1} d\rho-\int_r^{+\infty}\zeta(\rho)\theta(\rho)\,N\omega_N\rho^{N-1} d\rho\\
&\geq\int_0^r\zeta(r)(1-\theta(\rho))\,N\omega_N\rho^{N-1} d\rho-\int_r^{+\infty}\zeta(r)\theta(\rho)\,N\omega_N\rho^{N-1} d\rho\\
&=\zeta(r)\Big(\|\hat\theta-\theta\|_{L^1(B(r))}-\|\theta\|_{L^1(\R^N\setminus B(r))}\Big)=0\,.
\end{split}\]
The inequality~(\ref{cicleo}) is then proved, and in the special case when $\hat f=f$, thus $\zeta$ is strictly decreasing, the inequality is strict unless $\zeta(\rho)=\zeta(r)$ for every $\rho$ such that $\hat\theta(\rho)\neq \theta(\rho)$, that is, unless $\hat\theta=\theta$.
\end{proof}
We have then the following result.
\begin{lemma}\label{lemven}
Let $f_n:\R^N\to [0,1]$ be a sequence of $L^1$ functions which weakly* converges in $L^\infty(\R^N)$ to $f:\R^N\to [0,1]$ with $\int f=\lim_{n\to\infty} \int f_n$. Then $\F(f)=\lim_{n\to\infty} \F(f_n)$.
\end{lemma}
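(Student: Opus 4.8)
\section*{Proof plan for Lemma~\ref{lemven}}

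\noindent My plan is to show that the \emph{quadratic} functional $\F$ is continuous along the sequence by exploiting the smoothing effect of the Riesz kernel. Write $g_n:=f_n-f$, so that $g_n\to0$ weakly* in $L^\infty(\R^N)$, $|g_n|\le1$, and $C_0:=\sup_n\|g_n\|_{L^1}<+\infty$ (the latter because $\int f_n\to\int f<+\infty$). Since $\I$ is a symmetric bilinear form and all the quantities below are finite (for the pure energies by the previous lemma, for the mixed terms by the bound in (a)), Fubini gives the identity
\[
\F(f_n)-\F(f)=\I(f_n,f_n)-\I(f,f)=\I(f_n+f,\,g_n)=\int_{\R^N}(f_n+f)(x)\,(K*g_n)(x)\,dx\,,
\]
where $K(z)=|z|^{\alpha-N}$. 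Thus it suffices to prove $\int_{\R^N}(f_n+f)(K*g_n)\to0$. I would deduce this from three facts: \textbf{(a)} the functions $K*g_n$ are uniformly bounded, $\|K*g_n\|_{L^\infty}\le M$; \textbf{(b)} $(K*g_n)(x)\to0$ for every fixed $x\in\R^N$; \textbf{(c)} the sequence is tight, i.e.\ $\sup_n\int_{\R^N\setminus B(\rho)}(f_n+f)\to0$ as $\rho\to+\infty$.

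\noindent Granting (a)--(c) the conclusion is quick: given $\eps>0$, choose $\rho$ with $\sup_n\int_{\R^N\setminus B(\rho)}(f_n+f)\le\eps$; then, using $f_n+f\le2$ and (a),
\[
\Big|\int_{\R^N}(f_n+f)(K*g_n)\Big|\le 2\int_{B(\rho)}|K*g_n|+M\eps\,,
\]
and the first term tends to $0$ by (a), (b) and dominated convergence on the bounded set $B(\rho)$; letting $\eps\to0$ finishes the proof.

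\noindent Facts (a) and (b) follow from a splitting of the convolution that isolates the singularity of $K$ at the origin, a bounded annulus, and the far region. For (a), write $(K*g_n)(x)=\int_{B_x(1)}+\int_{\R^N\setminus B_x(1)}$: the first integral is at most $\int_{B_x(1)}|y-x|^{\alpha-N}dy=N\omega_N/\alpha$ since $|g_n|\le1$, and the second is at most $\|g_n\|_{L^1}$ since there $|y-x|\ge1$; hence $M:=N\omega_N/\alpha+C_0$ works. For (b), fix $x$ and $\eps>0$: by the same scale‑invariant estimate $\big|\int_{B_x(r)}g_n(y)\,|y-x|^{\alpha-N}dy\big|\le (N\omega_N/\alpha)\,r^{\alpha}<\eps$ for $r$ small, uniformly in $n$; likewise $\big|\int_{\R^N\setminus B_x(R)}g_n(y)\,|y-x|^{\alpha-N}dy\big|\le R^{\alpha-N}C_0<\eps$ for $R$ large, uniformly in $n$; and on the annulus $B_x(R)\setminus B_x(r)$ the kernel $y\mapsto|y-x|^{\alpha-N}$ is bounded and compactly supported, hence in $L^1(\R^N)$, so the remaining integral tends to $0$ by the weak* convergence $g_n\to0$. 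Thus $\limsup_n|(K*g_n)(x)|\le2\eps$, giving (b).

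\noindent Finally, (c) is the only place the hypothesis $\int f=\lim_n\int f_n$ is used. Since $\Chi{B(\rho)}\in L^1(\R^N)$, weak* convergence gives $\int_{B(\rho)}f_n\to\int_{B(\rho)}f$ for each fixed $\rho$; combined with $\int f_n\to\int f$ this yields $\int_{\R^N\setminus B(\rho)}f_n\to\int_{\R^N\setminus B(\rho)}f$, and as $f\in L^1$ the right‑hand side is below any prescribed $\eps$ for $\rho$ large. Hence $\int_{\R^N\setminus B(\rho)}f_n\le\eps$ for all but finitely many $n$, and enlarging $\rho$ to absorb those finitely many integrable functions gives (c) (the term $\int_{\R^N\setminus B(\rho)}f$ is handled trivially). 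The heart of the argument — and the only real obstacle, since weak* convergence alone cannot control a quadratic form — is precisely the tension between the two ``bad'' features of the kernel: its local singularity, tamed by the scale‑invariant bound $\int_{B_x(r)}|y-x|^{\alpha-N}dy=(N\omega_N/\alpha)r^{\alpha}$ together with $|g_n|\le1$, and its slow decay at infinity, tamed by the tightness (c); together they reduce everything to testing $g_n\to0$ against genuinely $L^1$ kernels supported on bounded annuli.
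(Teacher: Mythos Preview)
Your argument is correct, and it takes a genuinely different route from the paper's proof. The paper truncates to a bounded set $\Omega$ and argues on the product space: it observes that $f_n\rightharpoonup f$ weak* in $L^\infty(\R^N)$ forces $f_n(x)f_n(y)\rightharpoonup f(x)f(y)$ weak* in $L^\infty(\R^N\times\R^N)$, and then simply tests this against the $L^1(\R^N\times\R^N)$ function $\Chi\Omega(x)\Chi\Omega(y)|x-y|^{\alpha-N}$, controlling the tails outside $\Omega$ via the same tightness you use. Your approach instead linearizes via the polarization identity $\F(f_n)-\F(f)=\I(f_n+f,g_n)$ and transfers the kernel onto $g_n$ as a convolution, so you only ever pair $g_n$ with honest scalar $L^1$ test functions (the truncated kernels on annuli). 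This buys you something: you never need the product--space weak* convergence of $f_n(x)f_n(y)$, which, while standard, is a nonlinear statement requiring its own density/uniform--boundedness argument. In exchange you do a bit more work showing the pointwise convergence and uniform bound of $K*g_n$, but your three--region splitting handles this cleanly. Both proofs use the hypothesis $\int f_n\to\int f$ in exactly the same way, to secure tightness.
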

\begin{proof}
Let $\eps>0$ be any given number, and let $\Omega\subseteq\R^N$ be a bounded open set such that $\int_{\R^N\setminus\Omega} f<\eps$. The assumption that $\int f=\lim_{n\to\infty} \int f_n$ implies also $\int_{\R^N\setminus\Omega} f_n<\eps$ for $n$ large enough.\par

For any function $g:\R^N\to [0,1]$, let us call $\hat g:\R^N\times \R^N\to [0,1]$ the function given by $\hat g(x,y)=g(x)g(y)$. Notice that the weak* convergence of $f_n$ to $f$ in $L^\infty(\R^N)$ implies also the weak* convergence of $\hat f_n$ to $\hat f$ in $L^\infty(\R^N\times\R^N)$. As a consequence, since the function $(x,y)\mapsto \Chi\Omega(x)\Chi\Omega(y)/|y-x|^{N-\alpha}$ belongs to $L^1(\R^N\times \R^N)$, we get
\begin{equation}\label{almall}
\F(f_n\Chi{\Omega}) \freccia{n\to\infty} \F(f\Chi{\Omega})\,.
\end{equation}
We observe now that, also by the left inequality in~(\ref{riesz}) ,
\[\begin{split}
|\F(f) - \F(f\Chi\Omega)|&=\I(f\Chi{\R^N\setminus\Omega},f\Chi{\R^N\setminus\Omega}) + 2\I(f\Chi\Omega,f\Chi{\R^N\setminus\Omega})
\leq 2\I(f,f\Chi{\R^N\setminus\Omega}) \\
&=2\int_{\R^N\setminus \Omega} f(x) \bigg(\int_{\R^N} \frac{f(y)}{|y-x|^{N-\alpha}}dy\bigg)\,dx\\
&\leq 2 \int_{B(\sqrt[N]{\|f\|_{L^1}/\omega_N})} \frac 1{|y|^{N-\alpha}}\, dy\int_{\R^N\setminus \Omega} f(x) \, dx \\
&\leq 2 \eps \int_{B(\sqrt[N]{\|f\|_{L^1}/\omega_N})} \frac 1{|y|^{N-\alpha}}\, dy\,,
\end{split}\]
and in the very same way
\[
|\F(f_n) - \F(f_n\Chi\Omega)| \leq 2 \eps \int_{B(\sqrt[N]{\|f_n\|_{L^1}/\omega_N})} \frac 1{|y|^{N-\alpha}}\, dy\,.
\]
Since $\|f_n\|_{L^1}\to \|f\|_{L^1}$ and $\eps$ is arbitrary, by~(\ref{almall}) we deduce the thesis.
\end{proof}

We can now give the proof of Lemma~\ref{reduction}.

\proofof{Lemma~\ref{reduction}}
Let $\{E_n\}$ be a sequence of sets in $\R^N$ such that $|E_n|=\omega_N$ for every $N$, and $D(E_n)\to 0$ for $n\to\infty$. To show the claim, we have to prove that necessarily $\delta(E_n)\to 0$.\par
The concentration-compactness Lemma by Lions (\cite{L}, see also~\cite{S}) ensures that, up to pass to a subsequence and to translate the sets, one of the three following possibilities hold:\\
\emph{vanishing:} for every $R>0$ one has $\lim_{n\to\infty} \sup_{x\in\R^N} |E_n \cap B_x(R)| = 0$.\\
\emph{compactness:} for every $\eps>0$ there exists $\overline R=\overline R(\eps)$ such that $\limsup_{n\to \infty} |E_n \setminus B(\overline R)| <\eps$.\\
\emph{dichotomy:} there exists $0<\lambda < \omega_N$ such that, for every $\eps>0$, there exist $\overline R=\overline R(\eps)$ and sets $E^1_n,\, E^2_n\subseteq E_n$ with $E^1_n\subseteq B(\overline R)$ such that
\begin{align*}
\limsup_{n\to\infty} \big||E^1_n| - \lambda\big|<\eps\,, &&
\limsup_{n\to\infty} \big||E^2_n| - (|\omega_N|-\lambda)\big|<\eps\,, &&
\lim_{n\to \infty} {\rm dist}(E^1_n,E^2_n) =\infty\,.
\end{align*}
We consider the three possibilities separately.\par

First of all, we can easily exclude the \emph{vanishing}. In fact, assume that the vanishing holds, and let $R=1$. For every $n\in\N$, we have
\[
\delta(E_n) = \inf_{x\in\R^N} |E_n \Delta B_x(1)|
= 2 \inf_{x\in\R^N} |B_x(1)\setminus E_n| = 2\omega_N - 2 \sup_{x\in\R^N} |B_x(1)\cap E_n|\,,
\]
which by definition of vanishing implies that $\delta(E_n)\to 2\omega_N$. By Lemma~\ref{verybad}, we find a contradiction with the assumption that $D(E_n)\to 0$, thus the vanishing is excluded.\par

We can now exclude also the \emph{dichotomy}. Indeed, let us assume that dichotomy holds, and let $0<\lambda<\omega_N$ and $E^1_n,\, E^2_n$ be as in the definition. Since $E^1_n$ and $E^2_n$ are disjoint for $n$ large enough (because their distance explodes), we have $E_n=E^1_n\cup E^2_n\cup E^3_n$, with $E^3_n = E_n \setminus (E^1_n\cup E^2_n)$. We fix now some positive $\eps$, to be specified in a moment, and call
\begin{align*}
R_1 = \bigg(\frac{\lambda+2\eps}{\omega_N}\bigg)^{1/N} && R_2 = \bigg(\frac{\omega_N-\lambda+2\eps}{\omega_N}\bigg)^{1/N}\
\end{align*}
the radii of two balls having volume $\lambda+2\eps$ and $\omega_N-\lambda+2\eps$ respectively. Since for $n$ big enough we have $|E^1_n|< \lambda+2\eps$ and $|E^2_n|<\omega_N-\lambda+2\eps$, and since balls maximize the energy among sets with the same volume, we immediately obtain the estimates
\begin{align*}
\F(E^1_n) &\leq \F(B(R_1))
= \bigg(\frac{\lambda+2\eps}{\omega_N}\bigg)^{1+\frac\alpha N} \F(B)\,, &&
\F(E^2_n) \leq \bigg(\frac{\omega_N-\lambda+2\eps}{\omega_N}\bigg)^{1+\frac\alpha N} \F(B)\,,
\end{align*}
which by strict convexity imply
\begin{equation}\label{la1}
\F(E^1_n)+\F(E^2_n) < \F(B) -\eps(9\tau_1(\omega_N)+1)
\end{equation}
as soon as $\eps$ has been chosen small enough. Keeping in mind that $|E^3_n|<3\eps$, again for $n$ large enough, by Lemma~\ref{generic} we have also
\begin{equation}\label{la2}
\F(E^3_n) + 2\I(E^3_n,E^1_n\cup E^2_n) \leq 9\eps \tau_1(\omega_N)\,.
\end{equation}
Putting together~(\ref{la1}) and~(\ref{la2}), and keeping in mind that the distance between $E^1_n$ and $E^2_n$ diverges, we can then evaluate the energy as
\[
\F(E_n) = \F(E^1_n) + \F(E^2_n) + \F(E^3_n) + 2\I(E^3_n,E^1_n\cup E^2_n) + 2\I(E^1_n,E^2_n)<\F(B)-\eps
\]
for every $n$ large enough. This clearly contradicts the fact that $D(E_n)\to 0$ for $n\to\infty$, hence also the dichotomy is excluded.\par

Summarizing, we have reduced ourselves to consider the case when \emph{compactness} holds. In this last case, let us call $f_n = \Chi{E_n}$. Up to a subsequence, $\{f_n\}$ weakly* converges in $L^\infty(\R^N)$ to some $L^1$ function $f:\R^N\to [0,1]$. The fact that \emph{compactness} holds readily implies that $\|f\|_{L^1}=\lim_{n\to\infty} \{f_n\}_{L^1}=\omega_N$. As a consequence, by Lemma~\ref{lemven} we have that $\F(f)=\lim_{n\to\infty} \F(f_n)=\lim_{n\to\infty} \F(E_n)=\F(B)$, where the last equality holds because $D(E_n)\to 0$. By the right inequality in~(\ref{riesz}), we deduce that $f=\Chi{B_z}$ for some $z\in\R^N$. By the weak* convergence of $f_n$ to $f$ we obtain
\[
\limsup_{n\to\infty} \delta(E_n) 
\leq \limsup_{n\to\infty} |E_n \Delta B_z|
=2 \limsup_{n\to\infty} |E_n\setminus B_z|
=2\limsup_{n\to\infty} \int_{\R^N} f_n(x)(1- f(x))\, dx=0
\]
as desired. This concludes the proof.
\end{proof}

\subsection{A set uniformly close to a ball}

Our next aim is to show that a set with small Fraenkel asimmetry can be slightly modified in order to be uniformly close to a ball. More precisely, we will show the following weaker version of Proposition~\ref{mainreduction}.

\begin{lemma}\label{firstreduction}
For every $0<\eps<1$ there exists some $\delta_\eps$ with the following property. For every $E\subseteq\R^N$ with $|E|=\omega_N$ and $\delta(E)<\delta_\eps$, there is another set $E'\subseteq\R^N$, still with $|E'|=\omega_N$, such that $B(1-\eps^2)\subseteq E' \subseteq B(1+\eps^2)$ and
\begin{align}\label{firstineq}
D(E')\leq D(E)\,, && \delta(E')=\delta(E)\,.
\end{align}
\end{lemma}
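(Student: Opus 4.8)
The plan is to produce $E'$ by a two-step "fill the hole / remove the excess" construction followed by a volume-fixing rescaling, and to check that both operations decrease (or preserve) the deficit while not changing the asymmetry. First I would fix a candidate optimal ball. Since $\delta(E)$ is small (smaller than $\delta_\eps$), the infimum in the definition of $\delta(E)$ is essentially attained at some center, and after a translation we may assume that the ball $B=B_0(1)$ is (almost) optimal, i.e. $|E\Delta B|$ is comparable to $\delta(E)$ and in particular tiny. The idea is then to set
\[
E''=\bigl(E\cup B(1-\eps^2)\bigr)\setminus\bigl(B(1+\eps^2)^c\bigr)^c\,,
\]
more precisely $E''=\bigl(E\cap B(1+\eps^2)\bigr)\cup B(1-\eps^2)$, so that by construction $B(1-\eps^2)\subseteq E''\subseteq B(1+\eps^2)$. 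Because $\delta(E)$ is small, the measures of the two "correction regions" $A_{\mathrm{in}}=B(1-\eps^2)\setminus E$ (mass we add) and $A_{\mathrm{out}}=E\setminus B(1+\eps^2)$ (mass we remove) are both bounded by $|E\Delta B|$, hence by $C\delta_\eps$, and in particular $|A_{\mathrm{in}}|,|A_{\mathrm{out}}|\to 0$ as $\eps\to 0$.

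Next I would show $\F(E'')\geq\F(E)$, i.e. the deficit does not increase. Write $\F(E'')-\F(E)$ as a sum of interaction terms using bilinearity of $\I$: adding the inner shell $A_{\mathrm{in}}$ increases the energy by $2\I(A_{\mathrm{in}},E)+\F(A_{\mathrm{in}})$, and removing $A_{\mathrm{out}}$ decreases it by $2\I(A_{\mathrm{out}},E\setminus A_{\mathrm{out}})+\F(A_{\mathrm{out}})$ (with a further small cross term $2\I(A_{\mathrm{in}},A_{\mathrm{out}})$ of a definite sign). The key point is a comparison of the "potentials": a point of $A_{\mathrm{in}}$ sits deep inside $B$, so the function $y\mapsto\int_E |y-x|^{\alpha-N}\,dx$ evaluated there is large (close to $\psi$ near the center), whereas a point of $A_{\mathrm{out}}$ lies outside $B(1+\eps^2)$, so the same potential there is small. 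Quantitatively, using Lemma~\ref{lemnew} / \eqref{eqstar} to bound the potential generated by $E$ from below on $A_{\mathrm{in}}$ (compare with the potential of a ball of volume $\omega_N$ concentric-ish with the point) and from above on $A_{\mathrm{out}}$ (the set $E$ has volume $\omega_N$, so by \eqref{eqstar} its potential at a far point is at most that of $B_x(r)$ with $r=1$), one gets that the potential on $A_{\mathrm{in}}$ exceeds the potential on $A_{\mathrm{out}}$ by a fixed positive gap $g(\eps)>0$ once $\delta_\eps$ is small enough that $E$ is close to $B$. Since the two correction regions can be assumed to have exactly the same measure — this is where I would be slightly careful and instead transport mass: move $\min(|A_{\mathrm{in}}|,|A_{\mathrm{out}}|)$ of the excess into the hole, which is exactly volume-preserving — the net change $\F(E'')-\F(E)$ is at least (gap)$\times$(transported mass) minus the small self-interaction terms $\F(A_{\mathrm{in}})+\F(A_{\mathrm{out}})$, which by Lemma~\ref{generic} are $O\bigl(|A|\,\tau_1(|A|)\bigr)=o(|A|)$; hence for $\delta_\eps$ small the net change is $\geq 0$, i.e. $\F(E'')\geq\F(E)$ and $D(E'')\leq D(E)$.

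Then I would restore the volume. After the transport above, $|E''|=\omega_N$ already, so in fact no rescaling is needed — but if the cleanest construction leaves $|E''|$ slightly off $\omega_N$, apply the dilation $x\mapsto\lambda x$ with $\lambda=(\omega_N/|E''|)^{1/N}$; since $\F$ scales like $\lambda^{N+\alpha}$ and the deficit is controlled, one checks the deficit still does not increase after also using that balls are maximizers, and the inclusions $B(1-\eps^2)\subseteq E'\subseteq B(1+\eps^2)$ survive provided $\eps^2$ was chosen with a little room (replace $\eps^2$ by $\eps^2/2$ in the construction). Finally, the asymmetry: since $\delta(E')=\inf_x|E'\Delta B_x|$ and $E'$ differs from $E$ only inside the thin annulus $B(1+\eps^2)\setminus B(1-\eps^2)$ in a volume-preserving way, while the optimal ball for $E$ is (a translate of) $B$, one shows $|E'\Delta B_x|=|E\Delta B_x|$ for the relevant centers — the mass we moved was removed from $E\setminus B$ and added to $B\setminus E$, i.e. precisely from the "wrong side" to fill the "missing side", so $|E'\Delta B|=|E\Delta B|-2(\text{transported mass})$; this is not quite "$\delta(E')=\delta(E)$". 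I expect this last bookkeeping to be the main obstacle: one must arrange the construction so that the transported mass leaves $\delta$ unchanged, which forces moving mass only within $E\cap B$ (from the part of $E\cap B$ lying in $B(1+\eps^2)\setminus B(1-\eps^2)$, there is none to move) — the correct fix is to fill $B(1-\eps^2)\setminus E$ with mass taken from $E\setminus B(1+\eps^2)$, so that $E'\setminus B_x$ and $B_x\setminus E'$ change by the same amount for every $x$ with $B_x$ near $B$, giving $|E'\Delta B_x|=|E\Delta B_x|$ exactly and hence $\delta(E')=\delta(E)$, while for far-away centers $x$ the symmetric difference is $\approx 2\omega_N$ and plays no role. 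Carrying out this symmetric-difference accounting rigorously, and verifying the potential-gap estimate with explicit use of $\alpha>1$ (needed for finiteness of the comparison integrals, exactly as in Lemma~\ref{lessgeneric}), are the two technical hearts of the argument.
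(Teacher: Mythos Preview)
Your energy-gap heuristic is sound and matches the paper's, but the asymmetry bookkeeping contains a genuine error that breaks the argument. You propose to fill $A_{\mathrm{in}}=B(1-\eps^2)\setminus E$ with mass taken from $A_{\mathrm{out}}=E\setminus B(1+\eps^2)$. For any center $x$ with $|x|\le\eps^2/3$ one has $A_{\mathrm{in}}\subseteq B_x\setminus E$ and $A_{\mathrm{out}}\subseteq E\setminus B_x$, so transporting a mass $m$ removes $m$ from $E\setminus B_x$ \emph{and} removes $m$ from $B_x\setminus E$; hence $|E'\Delta B_x|=|E\Delta B_x|-2m$, not $|E\Delta B_x|$. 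You noticed this yourself two lines earlier, and the ``correct fix'' you then announce is literally the same construction, with the mistaken inference that ``both sides change by the same amount'' forces the symmetric difference to be unchanged. It forces the opposite: both sides shrink, so $\delta(E')<\delta(E)$.

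What the paper does instead is the missing idea: it never moves mass across $\partial B$. It performs two separate, individually volume-preserving moves, each staying on one side of $\partial B_x$. First, the far excess $G=E\setminus B(1+\eps^2)$ is replaced by a set $\widetilde G$ of equal measure placed in the annulus $B(1+\eps^2/2)\setminus\bigl(B(1+\eps^2/3)\cup E\bigr)$, which is still \emph{outside} $B_x$; this gives $E_1\subseteq B(1+\eps^2)$ with $|E_1\Delta B_x|=|E\Delta B_x|$. Second, the deep hole $H=B(1-\eps^2)\setminus E_1$ is filled using a set $\widetilde H$ of equal measure carved out of $E_1\cap\bigl(B(1-\eps^2/3)\setminus B(1-\eps^2/2)\bigr)$, which is still \emph{inside} $B_x$; this gives $E'\supseteq B(1-\eps^2)$ with $|E'\Delta B_x|=|E_1\Delta B_x|$. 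The energy gain in each step comes from $\psi(1+\eps^2/2)-\psi(1+\eps^2)>0$ and $\psi(1-\eps^2)-\psi(1-\eps^2/2)>0$ respectively, beating the $\tau_1(\delta_\eps)$ error terms. This two-annulus trick simultaneously solves your volume problem (no need for $|A_{\mathrm{in}}|=|A_{\mathrm{out}}|$, and no rescaling --- which, incidentally, would \emph{increase} the deficit when $\lambda<1$ by an amount $\sim\F(B)\,|A|$ that your $O(\eps^2)|A|$ gap cannot absorb) and the asymmetry problem.
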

\begin{proof}
Let $E$ be a set as in the claim. Up to a translation, we can assume that $B$ is optimal for the Fraenkel asymmetry, that is, $\delta(E)=|E\Delta B|$. We construct the set $E'$ in two steps. First of all, we look for a set $E_1\subseteq B(1+\eps^2)$ with volume $\omega_N$ and such that
\begin{align}\label{firstineq0}
D(E_1)\leq D(E)\,, && (E_1\Delta E) \cap B(1+\eps^2/3)=\emptyset\,.
\end{align}
To do so, we define $G=E\setminus B(1+\eps^2)$. If $\delta_\eps$ is small enough, it is possible to define some $\widetilde G\subseteq B(1+\eps^2/2)\setminus (B(1+\eps^2/3)\cup E)$ such that $|\widetilde G|=|G|$. We can then set
\[
E_1 = (E \cup \widetilde G) \setminus G\,.
\]
By construction, $|E_1|=|E|$ and the right property in~(\ref{firstineq0}) holds, hence we have only to take care of the left inequality. We have
\begin{equation}\label{tainan}\begin{split}
\F(E_1)&= \I(E_1,E_1)
=\I(E,E) + \I(E,\widetilde G)-\I(E,G)+\I(E_1,\widetilde G)-\I(E_1,G)\\
&\geq \F(E)+2\I(E,\widetilde G)-2\I(E,G) -2\I(G,\widetilde G)\\
&\geq \F(E) +2\I(B,\widetilde G) -2\I(B,G)-2\I(B\setminus E,\widetilde G)-2\I(E\setminus B,G)-2\I(G,\widetilde G)\,.
\end{split}\end{equation}
By Lemma~\ref{generic}, since $|\widetilde G|=|G|\leq |E\setminus B|=|B\setminus E|=\delta(E)/2 \leq \delta_\eps$, we can estimate
\[
2\I(B\setminus E,\widetilde G)+2\I(E\setminus B,G)+2\I(G,\widetilde G)
\leq 6\tau_1(\delta_\eps) |G|\,.
\]
On the other hand, by construction and by definition of $\psi$ we have
\begin{align*}
\I(B,\widetilde G) = \int_{\widetilde G} \psi(|x|)\,dx \geq |G| \psi(1+\eps^2/2)\,, &&
\I(B,G) = \int_G \psi(|x|)\,dx \leq |G| \psi(1+\eps^2)\,.
\end{align*}
Inserting the last estimates into~(\ref{tainan}), we get
\[
\F(E_1)-\F(E) \geq 2 |G| \big( \psi(1+\eps^2/2)-\psi(1+\eps^2) - 3\tau_1(\delta_\eps)\big)\,,
\]
and since $\tau_1$ is continuous and increasing, with $\tau_1(0)=0$, as soon as $\delta_\eps\ll \eps^2$ we get $\F(E_1)\geq \F(E)$, so also the left inequality in~(\ref{firstineq0}) is obtained and then~(\ref{firstineq0}) is established.\par

We now repeat the same procedure to find a set $E'\supseteq B(1-\eps^2)$ satisfying $|E'|=\omega_N$ and
\begin{align}\label{firstineq1}
D(E')\leq D(E_1)\,, && E_1\Delta E'\subseteq B(1-\eps^2/3)\,.
\end{align}
More precisely, we define $H=B(1-\eps^2)\setminus E_1$, we let $\widetilde H\subseteq (B(1-\eps^2/3)\cap E_1)\setminus B(1-\eps^2/2)$ be a set with $|\widetilde H|=|H|$, and we set $E'=(E_1\cup H)\setminus \widetilde H$. By construction we have that $|E'|=\omega_N$, that $B(1-\eps^2)\subseteq E'\subseteq B(1+\eps^2)$, and that the right inclusion in~(\ref{firstineq1}) holds. In addition, the very same calculation as in~(\ref{tainan}) now gives
\[
\F(E')\geq \F(E_1) +2\I(B,H)-2\I(B,\widetilde H) -2\I(E_1\setminus B,\widetilde H)-2\I(B\setminus E_1,H)-2\I(H,\widetilde H)\,,
\]
and as before by Lemma~\ref{generic} we can estimate
\[
2\I(E_1\setminus B,\widetilde H)+2\I(B\setminus E_1,H)+2\I(H,\widetilde H) \leq 6\tau_1(\delta_\eps) |H|\,,
\]
as well as
\begin{align*}
\I(B,H) = \int_H \psi(|x|)\,dx \geq |H| \psi(1-\eps^2)\,, &&
\I(B,\widetilde H) = \int_{\widetilde H} \psi(|x|)\,dx \leq |H| \psi(1-\eps^2/2)\,.
\end{align*}
Therefore,
\[
\F(E')-\F(E_1)\geq 2|H|\big( \psi(1-\eps^2)-\psi(1-\eps^2/2) - 3\tau_1(\delta_\eps)\big)\geq 0\,,
\]
where the last inequality holds true as soon as $\delta_\eps$ has been chosen small enough. Thus, also the left inequality in~(\ref{firstineq1}) is established, so~(\ref{firstineq1}) is proved.\par
Summarizing, we have defined a set $B(1-\eps^2)\subseteq E'\subseteq B(1+\eps^2)$ with $|E'|=\omega_N$. The left inequality in~(\ref{firstineq}) follows by the left inequalities in~(\ref{firstineq0}) and~(\ref{firstineq1}), thus to conclude the proof we only have to show the right equality in~(\ref{firstineq}).\par

Notice that the right properties in~(\ref{firstineq0}) and~(\ref{firstineq1}) imply that
\[
E'\Delta E\subseteq B(1-\eps^2/3) \cup \Big(\R^N \setminus B(1+\eps^2/3)\Big)\,.
\]
As a consequence, for every $x\in\R^N$ with $|x|\leq \eps^2/3$ we have $|E'\Delta B_x|=|E\Delta B_x|$, so
\begin{equation}\label{fintan1}
|E'\Delta B_x|=|E\Delta B_x|\geq \delta(E)\,.
\end{equation}
On the other hand, take any $x\in\R^N$ with $|x|>\eps^2/3$. Keeping in mind that
\[
B\Delta B_x \subseteq B \Delta E \cup E\Delta E' \cup E'\Delta B_x \,,
\]
we have by construction
\begin{equation}\label{fintan2}
|E'\Delta B_x| \geq |B\Delta B_x| - |B\Delta E|- |E\Delta E'| \geq |B\Delta B_x| - 3\delta(E)\geq \delta(E)\,,
\end{equation}
where the last inequality holds true as soon as $\delta_\eps$ is small enough with respect to $\eps$, since $\delta(E)\leq \delta_\eps$ while $|B\Delta B_x|$ can be bounded from below with a strictly positive constant depending only on $N$ and $\eps$. Since for every $x\in\R^N$ we have the validity either of~(\ref{fintan1}) or of~(\ref{fintan2}), we deduce that $\delta(E')=|E'\Delta B|=\delta(E)$, thus the right equality in~(\ref{firstineq}).
\end{proof}

\subsection{The nearly spherical set around a ball}

In this section we show that any set uniformly close to a ball can be reduced to a nearly spherical set.

\begin{prop}\label{defex}
There exists $0<\eps_1\ll 1$ depending on $N$ and on $\alpha$ with the following property. Let $0<\eps<\eps_1$, let $E'\subseteq\R^N$ be a set of volume $\omega_N$, let $z\in\R^N$, and assume that $B_z(1-\eps)\subseteq E' \subseteq B_z(1+\eps)$. Then, either the estimate~(\ref{sharpest}) holds true for $E'$ with a suitable $C$ depending only on $N$ and $\alpha$, or there exists a set $E_z$, which is $\eps$-nearly spherical around $B_z$ and satisfies
\begin{align}\label{secondineq}
D(E_z)\leq 2D(E')\,, && |E_z\Delta B_z|\geq \frac{\delta(E')}6\,.
\end{align}
\end{prop}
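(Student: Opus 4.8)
The plan is to reduce $E'$ to a set whose boundary is a graph over $\partial B_z$ by ``sweeping'' in the radial direction, one half-line at a time, while keeping control of both the energy deficit and the asymmetry. After translating, we may assume $z=0$, so that $B(1-\eps)\subseteq E'\subseteq B(1+\eps)$. For each direction $\nu\in\partial B$, the set $E'$ meets the ray $\{t\nu:\, t>0\}$ in a subset of the annulus $[1-\eps,1+\eps]$, of one-dimensional measure $m(\nu)$; the idea is to replace $E'\cap\{t\nu\}$ by the single segment $\{t\nu:\, 1-\eps\le t\le 1-\eps+m'(\nu)\}$ chosen so that, after the rescaling forced by the Jacobian $t^{N-1}$, the $N$-dimensional volume on each infinitesimal cone is preserved. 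The resulting set $E_0$ has $|E_0|=\omega_N$, contains $B(1-\eps)$, is contained in $B(1+\eps)$, and its outer boundary is the graph of a function $u:\partial B\to(-\eps,\eps)$; so $E_0$ is $\eps$-nearly spherical around $B$.

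The key point is to compare $\F(E_0)$ with $\F(E')$. The rearrangement is ``monotone along rays'': we are moving mass inward, toward the origin, within each thin cone. One expects this to \emph{increase} the energy (mass is being pushed together), and that is what must be made rigorous. I would do this by writing $E_0$ as the image of $E'$ under a transport map $\Phi$ which, on each ray, is the monotone rearrangement onto the inner part of the annulus; since $E'$ and $E_0$ differ only inside the annulus $B(1+\eps)\setminus B(1-\eps)$ and the radial displacement is at most $2\eps$, one has $|y-\Phi(y)|\le 2\eps$ on the region where $\Phi\ne\mathrm{id}$, which has volume at most the volume of the annulus, itself $O(\eps)$. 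Hence Lemma~\ref{lessgeneric} gives $|\I(B,E')-\I(B,E_0)|\le C\eps$, and more generally $|\F(E')-\F(E_0)|$ is controlled; but a crude $O(\eps)$ bound is not enough, since $D(E')$ may itself be tiny. The honest estimate must compare against $\psi$: splitting $\F=\I(B,\cdot)+\I(\cdot\setminus B,\cdot)+\cdots$ as in the proof of Lemma~\ref{firstreduction}, the main term $\I(B,E_0)-\I(B,E')=\int(\psi(|x|)\,\text{over }E_0)-\int(\psi(|x|)\,\text{over }E')$ is nonnegative because $\psi$ is strictly decreasing and mass has been moved inward; the remaining terms are $o$ of this main term once $\eps$ is small, because $|E'\triangle B|=O(\eps)$ while the gain from the $\psi$-comparison is quantitatively $\gtrsim \eps\cdot|\text{annular part}|$ unless $E'$ is \emph{already} essentially a graph, in which case $E_0=E'$ and there is nothing to prove. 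This dichotomy — either the radial rearrangement genuinely decreases the energy enough that $D(E_0)\le 2D(E')$, or $E'$ was nearly a graph to begin with and the displacement is negligible — is precisely where the case split ``either \eqref{sharpest} holds or $E_z$ exists'' enters: if the displacement $\int_{E'} 1\wedge|y-\Phi(y)|\,dy$ is comparable to $\delta(E')$, then $\delta(E')$ is controlled by a quantity of order $\eps\,\delta(E')$ times the $\psi$-gap, which after reorganising bounds $\delta(E')$ by $C\sqrt{D(E')}$ directly via Lemma~\ref{verybad}-type considerations; otherwise the displacement is small relative to $\delta(E')$, so $|E_0\triangle B|\ge|E'\triangle B|-\text{(displacement)}\ge\delta(E')/6$, and the energy loss in passing from $E'$ to $E_0$ is at most $D(E')$, giving $D(E_0)\le 2D(E')$.

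Finally I would verify the asymmetry bound: since $E_0\triangle E'$ sits in the annulus $B(1+\eps)\setminus B(1-\eps)$ and has volume bounded by $C\eps\ll\delta(E')$ (in the relevant branch of the dichotomy), the triangle inequality $|E_0\triangle B|\ge|E'\triangle B|-|E'\triangle E_0|\ge\delta(E')-C\eps$ yields $|E_0\triangle B|\ge\delta(E')/6$ provided $\eps<\eps_1$ is small enough — here one also uses, as in \eqref{fintan2}, that $\delta(E')$ is itself small (from Lemma~\ref{firstreduction}) so that $C\eps$ can be absorbed. Setting $E_z=z+E_0$ recovers the general case. The main obstacle is the quantitative energy comparison: getting $D(E_0)\le 2D(E')$ rather than merely $D(E_0)\le D(E')+C\eps$ requires showing that the $\psi$-gain from inward rearrangement dominates all error terms, and it is exactly the failure of this domination that must be shown to force \eqref{sharpest} outright — turning what looks like a loss into the desired estimate is the delicate part.
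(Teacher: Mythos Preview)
Your overall plan---radial rearrangement followed by a dichotomy---is in the right spirit, but there is a genuine gap in the asymmetry control, and the paper's construction is materially different from yours.

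\textbf{The gap.} Your one-step rearrangement (push all annular mass of $E'$ inward to a single segment starting at $1-\eps$) can \emph{annihilate} the asymmetry: on a ray where $E'$ has a hole in $[1-\eps,1)$ and a matching bump in $(1,1+\eps]$, your $E_0$ fills the hole with the bump, so $E_0\cap\nu\R=[0,1]$ and the ray contributes nothing to $|E_0\Delta B|$, while it contributes of order $\eps$ to $|E'\Delta B|$. In such cases $|E'\triangle E_0|$ is \emph{comparable} to $\delta(E')$, not to $C\eps$; your final triangle-inequality line ``$|E_0\triangle B|\ge|E'\triangle B|-C\eps$'' is therefore false as written. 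Your dichotomy is also misstated: you split on whether the $L^1$ displacement $\int 1\wedge|y-\Phi(y)|$ is comparable to $\delta(E')$, but the quantity that enters the triangle inequality is $|E'\triangle E_0|$, the \emph{measure} of what moved. These are linked only by a quadratic inequality
\[
\Big(\H^1\big((E'\triangle E_0)\cap\nu\R\big)\Big)^2 \lesssim \int_{(E'\cap\nu\R)} |y-\Phi(y)|\,d\H^1,
\]
valid for the monotone radial map on each ray, followed by Cauchy--Schwarz over $\nu\in\S^{N-1}$ to get $|E'\triangle E_0|^2\lesssim \int|y-\Phi(y)|$. This is exactly what produces the exponent $1/2$ in~\eqref{sharpest}, and it is absent from your proposal (the reference to ``Lemma~\ref{verybad}-type considerations'' is a red herring). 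Without it, the branch ``$|E_0\Delta B|<\delta(E')/6\Rightarrow$ \eqref{sharpest}'' does not close.

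\textbf{How the paper differs.} The paper proceeds in two steps rather than one. First (Lemma~\ref{prelem}) it rearranges $E'\setminus B_z$ and $B_z\setminus E'$ \emph{separately}, obtaining the two-layer set $E''=\{t\in[0,1-u^-(x))\cup(1,1+u^+(x))\}$. This preserves $|E''\Delta B_z|=|E'\Delta B_z|$ exactly; the only possible loss is in $\delta$, and the dichotomy (with the ray-wise quadratic bound and Cauchy--Schwarz spelled out) handles that. Second, the proof of Proposition~\ref{defex} itself converts $E''$ into a genuine graph by an \emph{angular} redistribution: on small patches $U_i$ of $\S^{N-1}$ it sets $u=u_i^+$ on a sub-patch $L_i$ and $u=-u_i^-$ on the complement $R_i$, with the split~\eqref{wisechoice} chosen so that the local volume balances. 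A direct computation gives $|F_i|+|D_i|\ge\tfrac12(|\widetilde F_i|+|\widetilde D_i|)$, hence $|E_z\Delta B_z|\ge\tfrac12|\widetilde E''\Delta B_z|$, with no need for a further escape to~\eqref{sharpest}. The energy increase in this second step again comes from the $\psi$-gain along a short (now partly angular) transport, controlled because ${\rm diam}(U_i)\le\min\{u_i^+,u_i^-\}$.

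If you want to salvage your one-step route, you must insert the missing quadratic estimate and rewrite the dichotomy as: either $|E_0\Delta B|\ge\delta(E')/6$, or $|E'\triangle E_0|\ge 5\delta(E')/6$, whence $D(E')\ge c\int|y-\Phi(y)|\ge c'|E'\triangle E_0|^2\ge c''\delta(E')^2$.
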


To show the proposition, we need a preparatory lemma.

\begin{lemma}\label{prelem}
There exist constants $\eps_1$ and $C$ only depending on $N$ and on $\alpha$ such that, for any $\eps,\,E'$ and $z$ as in Proposition~\ref{defex} the following holds. If~(\ref{sharpest}) does not hold true for $E'$, then there exist two functions $u^\pm:\S^{N-1}\to [0,\eps)$ so that, defining
\begin{equation}\label{defE''}
E'' = \big\{ z + tx: \, t \in [0,1-u^-(x))\cup (1,1+u^+(x)),\ x\in \S^{N-1}\big\}\,,
\end{equation}
the set $E''$ has volume $\omega_N$ and satisfies
\begin{align}\label{preparineq}
D(E'')\leq D(E')\,, && \delta(E'')\geq \frac{\delta(E')}2\,.
\end{align}
\end{lemma}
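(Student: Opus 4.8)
The plan is to build $E''$ from $E'$ by a pure radial rearrangement: for each direction $x\in\S^{N-1}$ we look at the one-dimensional slice $E'\cap\{z+tx:t\geq0\}$, we measure the portion of mass lying inside the ball $B_z$ and the portion lying outside, and we redistribute that mass so that, along each ray, the inside part becomes a single segment $[0,1-u^-(x))$ ending at the sphere and the outside part becomes a single segment $(1,1+u^+(x))$ starting at the sphere. Concretely, writing $\mu^-(x)=\mathcal H^1\big(\{t\in[0,1):z+tx\in E'\}\big)$ (weighted by $t^{N-1}$, so that it is the $N$-dimensional measure of the corresponding conical slice) and similarly $\mu^+(x)$ for $t\in(1,1+\eps)$, one defines $u^-(x),u^+(x)\in[0,\eps)$ by requiring the segments $[1-u^-(x),1)$ and $(1,1+u^+(x))$ to have exactly those weighted lengths; this is possible precisely because $B_z(1-\eps)\subseteq E'\subseteq B_z(1+\eps)$, so the missing mass inside and the extra mass outside both live in the thin shell $B_z(1+\eps)\setminus B_z(1-\eps)$. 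By Fubini the total volume is unchanged, so $|E''|=\omega_N$, and the representation~(\ref{defE''}) holds with these $u^\pm$.

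For the energy inequality $D(E'')\leq D(E')$, i.e.\ $\F(E'')\geq\F(E')$, the idea is that along every ray we have pushed the "inside" mass outward (toward the sphere $|y-z|=1$) and pulled the "outside" mass inward (again toward the sphere), so on each ray the mass distribution of $E''$ is, relative to that of $E'$, a monotone rearrangement toward the radius $1$. Since the interaction kernel $\psi$ of~(\ref{defpsi}) — more precisely the function $\rho\mapsto\I(B_z,\{z+\rho x:\dots\})$, which by the argument in the proof of~(\ref{cicleo}) is governed by the decreasing profile $\zeta$ — it suffices to check that this ray-by-ray rearrangement does not decrease $\I(E'',B_z)$-type quantities, and then to iterate the one-ray comparison. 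I would make this rigorous by writing $\F(E')$ and $\F(E'')$ as double integrals over pairs of rays and, for the off-diagonal contributions, invoking a slicing version of the one-dimensional rearrangement inequality (the same mechanism used to prove that $\zeta$ is decreasing in the lemma preceding Lemma~\ref{lemven}); the diagonal self-interaction of a single ray is unchanged because rearranging a set on a half-line does not change its self-energy for a radial kernel. Alternatively, and probably more cleanly, one compares $E''$ directly with $B_z$: moving mass toward the sphere strictly increases $\I(\cdot,B_z)$ and leaves the remaining cross terms controlled by Lemma~\ref{lessgeneric} with a transport cost of order $\eps\cdot|E'\Delta B_z|$, which for $\eps<\eps_1$ small is negligible — this is the step where the hypothesis that $\eps$ is small (and $\alpha>1$, via finiteness of the $\tau_2$ integral) is used.

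For the asymmetry bound $\delta(E'')\geq\delta(E')/2$: the only change from $E'$ to $E''$ happens inside the shell $B_z(1+\eps)\setminus B_z(1-\eps)$, whose volume is $O(\eps)$, so $|E'\Delta E''|\leq C\eps$. Hence for every $x$, $|E''\Delta B_x|\geq|E'\Delta B_x|-|E'\Delta E''|\geq\delta(E')-C\eps$; and the dichotomy argument of Lemma~\ref{firstreduction} applies — if $|x|$ is bounded away from $0$ then $|B_z\Delta B_x|$ is bounded below by a constant depending only on $N$, so $|E''\Delta B_x|$ is large, while if $|x|$ is small the difference $|E''\Delta B_x|-\delta(E')$ is $O(\eps)$ plus an $O(|x|)$ term that only helps. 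Thus taking $\eps_1$ small enough that $C\eps_1\leq\delta(E')/2$ — which is legitimate only when $\delta(E')$ is not too small, but that is exactly the regime where~(\ref{sharpest}) would otherwise be in danger, so in the complementary case~(\ref{sharpest}) holds for $E'$ outright with a suitable $C$ and there is nothing to prove — gives $\delta(E'')\geq\delta(E')/2$.

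The main obstacle I anticipate is the energy inequality: making the ray-by-ray "rearrangement toward the unit sphere increases $\F$" argument airtight, because the kernel $\psi$ is not monotone as a function of the radius on both sides of $1$ (it is decreasing in $|y|$ as a function on $\R^N$, but $\I(B_z,\cdot)$ restricted to a ray first concerns points with $|y-z|<1$ and then $|y-z|>1$), so one cannot simply quote a one-line rearrangement statement. The safe route, which I would follow, is the comparison-with-$B_z$ version: decompose $\F(E'')-\F(E')$ using the transport map sending each ray-slice of $E'$ to the corresponding ray-slice of $E''$, bound the cross terms involving the symmetric difference by Lemma~\ref{lessgeneric}, and show the main term $2\I(B_z,E'')-2\I(B_z,E')=2\int(\psi\circ|\cdot-z|)(\Chi{E''}-\Chi{E'})$ is nonnegative because on each ray we have moved mass to where $\psi$ is larger — i.e.\ toward radius $1$ — which follows from the monotonicity of $\psi$ on $[0,\infty)$ together with the fact that all the redistributed mass stays within the shell of radii $[1-\eps,1+\eps]$ and the pieces on the two sides of $1$ are handled separately. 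The transport cost in Lemma~\ref{lessgeneric} is at most $\eps\,|E'\Delta B_z|$, so for $\eps<\eps_1(N,\alpha)$ the positive main term dominates.
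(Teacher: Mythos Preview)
Your construction of $E''$ and your ``safe route'' for the energy inequality match the paper's approach: a radial transport $\Phi$ from $H=E''\setminus E'$ to $K=E'\setminus E''$, the main term $\I(B_z,H)-\I(B_z,K)=\int_H\big(\psi(|y|)-\psi(|\Phi(y)|)\big)\,dy$, and cross terms controlled by Lemma~\ref{lessgeneric}. One slip: inside $B_z$ the mass is \emph{not} pushed toward the sphere. Along each ray the inside part of $E''$ is $[0,1-u^-(x))$, so the \emph{hole} sits against the sphere and the mass sits toward the center; on both sides of $\partial B_z$ the added mass $H$ lies at \emph{smaller} radii than the removed mass $K$, whence $\psi(|y|)>\psi(|\Phi(y)|)$ and the main term is $\geq c\int_H|y-\Phi(y)|\,dy$ with $c\approx-\psi'(1)$. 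With that correction your energy argument goes through exactly as in the paper.

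The asymmetry argument, however, has a genuine gap. Your bound $|E'\Delta E''|\leq C\eps$ is useless here because $\delta(E')$ itself is always $O(\eps)$ (the whole symmetric difference $E'\Delta B_z$ lives in the $\eps$-shell), so $\delta(E')-C\eps$ can well be negative; and your fallback ``if $\delta(E')$ is small then (\ref{sharpest}) holds outright'' is unjustified --- smallness of $\delta(E')$ says nothing about $D(E')$, which could be far smaller still. The paper's mechanism is different and is the missing idea. Assume $\delta(E'')<\delta(E')/2$; the triangle inequality then forces $|H|=\tfrac12|E'\Delta E''|\geq\delta(E')/4$. Now exploit the \emph{quantitative} energy gain $\F(E'')-\F(E')\geq c'\int_H|y-\Phi(y)|\,dy$ together with a ray-wise lower bound on the transport cost: on each ray $\nu$, writing $G_\nu=H\cap\nu\R^+$, the structure of the radial rearrangement forces at least half of $G_\nu$ to be displaced by at least $\H^1(G_\nu)/2$, giving $\int_{G_\nu}|y-\Phi(y)|\,d\H^1\gtrsim\H^1(G_\nu)^2$. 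Integrating over $\S^{N-1}$ and applying Cauchy--Schwarz yields $\int_H|y-\Phi(y)|\,dy\gtrsim|H|^2\gtrsim\delta(E')^2$, hence $D(E')\geq\F(E'')-\F(E')\gtrsim\delta(E')^2$, which is precisely (\ref{sharpest}) --- contradicting the hypothesis. So the dichotomy is: either $\delta(E'')\geq\delta(E')/2$, or (\ref{sharpest}) already holds for $E'$.
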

\begin{proof}
Let us assume, just for simplicity of notation, that $z=0$. We can immediately define $u^\pm:\S^{N-1}\to\R^+$ as the two functions such that, for every $x\in \S^{N-1}$, we have
\begin{align*}
\int_1^{1+u^+(x)} t^{N-1}\, dt = \int_1^{+\infty} t^{N-1} \Chi{E'}(tx)\,dt\,, &&
\int_{1-u^-(x)}^1 t^{N-1}\, dt = \int_0^1 t^{N-1} \Chi{^c\!E'}(tx)\,dt\,.
\end{align*}
Defining then $E''$ according to~(\ref{defE''}), we call now
\begin{align*}
G^+=(E'\setminus E'')\setminus B\,, && \widetilde G^+ = (E''\setminus E')\setminus B\,, \\
G^-=B\cap (E''\setminus E')\,, && \widetilde G^- = B\cap (E'\setminus E'')\,.
\end{align*}
Notice that $\widetilde G^+\subseteq B(1+\eps)\setminus B$ and $\widetilde G^-\subseteq B\setminus B(1-\eps)$, and moreover $|E''|=\omega_N$, in particular $|\widetilde G^+|=|G^+|$, $|\widetilde G^-|=|G^-|$.\par

We write now $H=\widetilde G^+\cup G^-$ and $K=G^+\cup \widetilde G^-$, and we define the function $\Phi:H\to K$ as follows. For any $y\in H$, we let $\Phi(y)=\varphi(y) \frac y{|y|}$ where, if $|y|\geq 1$,
\begin{equation}\label{defphi1}
\int_1^{|y|} \Chi{E''\setminus E'}\bigg(t\,\frac y{|y|}\bigg) t^{N-1}\,dt = \int_1^{\varphi(y)} \Chi{E'\setminus E''}\bigg(t\,\frac y{|y|}\bigg)t^{N-1} \,dt\,,
\end{equation}
and similarly, if $|y|\leq 1$,
\[
\int_{\varphi(y)}^1 \Chi{E'\setminus E''}\bigg(t\,\frac y{|y|}\bigg) t^{N-1}\,dt=\int_{|y|}^1 \Chi{E''\setminus E'}\bigg(t\,\frac y{|y|}\bigg)t^{N-1}\,dt\,.
\]
It is simple to notice that $\Phi$ is an invertible transport map between $H$ and $K$ (in fact, it is a sort of ``radial version'' of the well-known Knothe map). As a consequence, we can apply Lemma~\ref{lessgeneric} four times, so~(\ref{stima2}) implies
\begin{equation}\label{dawn}\begin{split}
\I(K,E'\setminus B)&-\I(H,E'\setminus B)
+\I(H,B\setminus E')-\I(K,B\setminus E')+\I(K,E''\setminus B)\\
&\phantom{-\I(K,E'\setminus B)\ }-\I(H,E''\setminus B)+\I(H,B\setminus E'')-\I(K,B\setminus E'')\\
&\leq 2\Big(\tau_2(|E'\setminus B|)+\tau_2(|B\setminus E'|)\Big) \int_H |y-\Phi(y)|\, dy \,,
\end{split}\end{equation}
also keeping in mind that $|E'\setminus B|=|E''\setminus B|$ and $|B\setminus E'|=|B\setminus E''|$. Moreover, since $B(1-\eps)\subseteq E'\subseteq B(1+\eps)$ and the same is true for $E''$, and since by construction for every $y\in H$ one has $y/|y| = \Phi(y)/|\Phi(y)|$ and $|\Phi(y)|\geq |y|$, then for every $y\in H$
\[
\psi(|y|)-\psi(|\Phi(y)|) \geq c |y-\Phi(y)|\,,
\]
where $c=\min \{|\psi'(t)|:\, 1-\eps\leq t\leq 1+\eps\}$. By integration, we get
\[
\I(H,B)-\I(K,B) \geq c \int_H |y-\Phi(y)|\, dy\,.
\]
Using this inequality together with~(\ref{dawn}), we can now evaluate
\begin{equation}\label{airtrain}\begin{split}
\F(&E'')-\F(E') = \I(H,E')-\I(K,E')+\I(H,E'')-\I(K,E'')\\
&=2\I(H,B) -2\I(K,B)+\I(H,E'\setminus B)-\I(K,E'\setminus B)+\I(K,B\setminus E')\\
&\hspace{20pt}-\I(H,B\setminus E')+\I(H,E''\setminus B)-\I(K,E''\setminus B)+\I(K,B\setminus E'')-\I(H,B\setminus E'')\\
&\geq 2\Big(c -\tau_2(|E'\setminus B|)-\tau_2(|B\setminus E'|)\Big) \int_H |y-\Phi(y)|\, dy \geq 0\,,
\end{split}\end{equation}
where the last inequality holds true if $\eps_1$ has been chosen small enough. Indeed, keeping in mind that $B(1-\eps)\subseteq E'\subseteq B(1+\eps)$ and that $\tau_2(t)\searrow 0$ for $t\searrow 0$, we get that $\tau_2(|E'\setminus B|)+\tau_2(|B\setminus E'|)$ is arbitrarily small if $\eps\ll 1$. The constant $c$, instead, converges to $-\psi'(1)>0$ for $\eps\ll 1$.\par

Summarizing, we have found a set $E''$, defined through~(\ref{defE''}), such that $|E''|=\omega_N$ and $D(E'')\leq D(E')$. To conclude the proof, then, we have to show that either $\delta(E'')\geq \delta(E')/2$, so that~(\ref{preparineq}) holds true, or~(\ref{sharpest}) is satisfied by $E'$ with a suitable constant $C=C(N,\alpha)$.\par

Let us call $B''$ a suitable translation of $B$ such that $\delta(E'')=|E''\Delta B''|$. If $|E''\Delta B''| > \delta(E')/2$ we are done, so we assume that the opposite inequality holds true. By definition of Fraenkel asymmetry of $E'$, we have
\[
\delta(E') \leq |E'\Delta B''| \leq |E'\Delta E''| + |E''\Delta B''|\leq |E'\Delta E''| + \frac{\delta(E')}2\,,
\]
which gives, by construction,
\begin{equation}\label{bycon}
|H| = \frac{|E'\Delta E''|}2 \geq \frac{\delta(E')}4\,.
\end{equation}
For any direction $\nu\in\S^{N-1}$, let us now call $G_\nu = H\cap \nu\R$ the section of $H$ in direction $\nu$, and we subdivide $G_\nu=G^{\rm ext}_\nu\cup G^{\rm int}_\nu$, where $G^{\rm ext}_\nu=G_\nu\setminus B$ and $G^{\rm int}_\nu=G_\nu\cap B$. Notice that $(E''\setminus B)\cap \nu\R$ is a segment, in particular it is the segment $(1,1+u^+(x))\nu$. Every point of $\widetilde G^+\cap \nu\R$ is in this segment, while every point of $G^+\cap \nu\R$ is outside it. As a consequence, for any $y$ in a subset of $G^{\rm ext}_\nu$ of length $\H^1(G^{\rm ext}_\nu)/2$ one has $|\Phi(y)-y|>\H^1(G^{\rm ext}_\nu)/2$, thus
\[
\int_{G^{\rm ext}_\nu} |\Phi(y)-y|\, d\H^1 \geq \frac{\big(\H^1(G^{\rm ext}_\nu)\big)^2}4\,.
\]
Similarly, for any $y$ in a subset of $G^{\rm int}_\nu$ of length $\H^1(G^{\rm int}_\nu)/2$ it is $|\Phi(y)-y|>\H^1(G^{\rm int}_\nu)/2$, thus
\[
\int_{G^{\rm int}_\nu} |\Phi(y)-y| \, d\H^1 \geq \frac{\big(\H^1(G^{\rm int}_\nu)\big)^2}4\,.
\]
The last two inequalities imply
\[
\int_{G_\nu} |\Phi(y)-y|\, d\H^1\geq \frac{\big(\H^1(G_\nu)\big)^2}8\,,
\]
so an integration over $\S^{N-1}$ together with~(\ref{bycon}) gives
\[\begin{split}
\frac{\delta(E')}4 &\leq |H|
\leq (1+\eps)^{N-1} \int_{\S^{N-1}} \H^1(G_\nu)\,d\nu
\leq (1+\eps)^{N-1} \sqrt{N\omega_N} \sqrt{\int_{\S^{N-1}} \H^1(G_\nu)^2\,d\nu}\\
&\leq (1+\eps)^{N-1} \sqrt{8N\omega_N} \sqrt{\int_{\S^{N-1}} \int_{G_\nu} |\Phi(y)-y|\,d\H^1\,d\nu}\\
&\leq \frac{(1+\eps)^{N-1}}{(1-\eps)^{\frac{N-1}2}} \,\sqrt{8N\omega_N} \sqrt{\int_H |\Phi(y)-y|\,dy}\,.
\end{split}\]
Keeping in mind~(\ref{airtrain}), for $\eps$ small enough we deduce
\[
D(E') = \F(B)-\F(E') \geq \F(E'')-\F(E') \geq \frac{-\psi'(1)}{129N\omega_N} \, \delta(E')^2\,,
\]
which is exactly~(\ref{sharpest}), as desired. This concludes the proof.
\end{proof}

We are now in position to give the proof of Proposition~\ref{defex}.

\proofof{Proposition~\ref{defex}}
Let $E'$ be a set satisfying the assumptions. If the estimate~(\ref{sharpest}) holds true, there is nothing to prove. Otherwise, by Lemma~\ref{prelem} we have two functions $u^\pm:\S^{N-1}\to [0,\eps)$ such that the set $E''$ defined by~(\ref{defE''}) satisfies the inequalities~(\ref{preparineq}).\par
We start by replacing $u^\pm$ with two new functions $\tilde u^\pm:\S^{N-1}\to [0,\eps)$ which are locally constant. More precisely, we claim the existence of two functions $\tilde u^\pm:\S^{N-1}\to [0,\eps)$ such that the following holds. First of all, $\S^{N-1}$ is the piecewise disjoint union of finitely many sets $U_i$, so that $\tilde u^+\equiv u^+_i$ and $\tilde u^-\equiv u^-_i$ on each $U_i$, and that
\begin{equation}\label{smallareas}
{\rm diam}(U_i) \leq \min\{u^+_i,\, u^-_i\}\qquad \forall\, i:\, \min\{u^+_i,\, u^-_i\} >0\,.
\end{equation}
In addition, the set $\widetilde E''$ defined as in~(\ref{defE''}) with $u^\pm$ replaced by $\tilde u^\pm$ satisfies the following slightly weaker version of~(\ref{preparineq}),
\begin{align}\label{slightlyweaker}
D(\widetilde E'')\leq 2 D(E')\,, && \delta(\widetilde E'')\geq \frac{\delta(E')}3\,.
\end{align}
The validity of the claim is obvious. Indeed, $u^\pm$ can be written as strong limits of functions as $\tilde u^\pm$, and the corresponding sets converge to $E''$ both in terms of the energy deficit $D(\cdot)$ and of the Fraenkel asymmetry $\delta(\cdot)$.\par

To define the nearly spherical set $E_z$, we need to give a function $u:\S^{N-1}\to (-\eps,\eps)$ according to Definition~\ref{def:nss}. We will define $u$ separately on each set $U_i$. Suppose first that $\min\{u^+_i,\, u^-_i\} =0$: in this case, we simply set $u=u^+_i$ if $u^-_i=0$, and $u=-u^-_i$ if $u^+_i=0$ (hence, $u=0$ if $u^+_i=u^-_i=0$).\par

Let us now assume that $\min\{u^+_i,\, u^-_i\} >0$. Let us then write $U_i$ as the disjoint union of two sets $L_i$ and $R_i$, where
\begin{equation}\label{wisechoice}
\H^{N-1}(L_i) \Big(1-(1-u^-_i)^N \Big)= \H^{N-1}(R_i) \Big((1+u^+_i)^N-1\Big)\,,
\end{equation}
and in the set $U_i=L_i\cup R_i$ we define then $u$ as
\[
u =\Chi{L_i} u^+_i - \Chi{R_i}u^-_i\,.
\]
At this stage, we have completely defined the function $u:\S^{N-1}\to (-\eps,\eps)$, thus also the set $E_z$ is determined according to~(\ref{eq:defnss}). What we have to do to complete the proof, is to prove the validity of~(\ref{secondineq}).\par
Let us start defining the sets
\begin{align*}
F= E_z \setminus B_z\,, && D = B_z\setminus E_z\,, && \widetilde F= \widetilde E''\setminus B_z\,, && \widetilde D = B_z\setminus \widetilde E''\,.
\end{align*}
Moreover, calling $K_i$ the cones $K_i=U_i\times \R^+$ we also define the intersections
\begin{align*}
F_i=F\cap K_i\,, && D_i =D\cap K_i \,, && \widetilde F_i=F\cap K_i\,, && \widetilde D_i =\widetilde D\cap K_i\,.
\end{align*}
By construction, since $-\tilde u^- \leq u \leq \tilde u^+$, we have the inclusions $F\subseteq \widetilde F$ and $D\subseteq \widetilde D$. On the other hand, for each $i$ we have that $|F_i|+|D_i| \geq (|\widetilde F_i|+|\widetilde D_i|)/2$, hence summing over $i$ we get $|E_z\Delta B_z|\geq |\widetilde E''\Delta B_z|/2$. By the right estimate in~(\ref{slightlyweaker}), we get then
\[
|E_z\Delta B_z|\geq \frac{|\widetilde E''\Delta B_z|}2
\geq \frac{\delta(\widetilde E'')}2\geq \frac{\delta(E')}6\,,
\]
so the right estimate in~(\ref{secondineq}) is obtained, and we only need to get the left one.\par
Thanks to~(\ref{wisechoice}), for every $i$ we have $|\widetilde F_i\setminus F_i|=|\widetilde D_i\setminus D_i|$. We want then to define an invertible transport map $\Phi$ between $\widetilde F\setminus F$ and $\widetilde D\setminus D$, in such a way that the restriction of $\Phi$ to every $\widetilde F_i\setminus F_i$ is an invertible transport map on $\widetilde D_i\setminus D$. For every $i$ such that $\min\{u^+_i,\, u^-_i\}=0$ this is emptily done, since $\widetilde F_i\setminus F_i=\widetilde D_i\setminus D=\emptyset$.\par

Consider then an index $i$ such that $\min\{u^+_i,\, u^-_i\}>0$. In this case, we can take any invertible transport map $\tau_i:\S^{N-1}\to\S^{N-1}$ between $\Chi{R_i}$ and $\frac{1-(1-u^-_i)^N}{(1+u^+_i)^N-1}\,\Chi{L_i}$; notice that $\tau_i$ is a map between $R_i$ and $L_i$. Observe that
\begin{align*}
\widetilde F_i &= (L_i\cup R_i) \times (1,1+u^+_i)\,, &
F_i &=L_i \times (1,1+u^+_i)\,,\\ 
\widetilde D_i&= (L_i\cup R_i) \times (1-u^-_i,1)\,, &
D_i &= R_i \times (1-u^-_i,1)\,.
\end{align*}
As a consequence, we can define the function $\Phi$ between $\widetilde F_i\setminus F_i$ and $\widetilde D_i\setminus D_i$ simply as
\[
\Phi(t\nu) = g_i(t) \tau_i(\nu)\qquad \forall\, \nu\in R_i,\, t\in (1,1+u^+_i)\,,
\]
where
\[
\H^{N-1}(R_i)(t^N-1)=\H^{N-1}(L_i)\big(g_i(t)^N-(1-u^-_i)^N\big) \,.
\]
By construction, $\Phi:\widetilde F\setminus F\to \widetilde D\setminus D$ is clearly an invertible transport map and, keeping in mind~(\ref{smallareas}) and the definition of $g_i$, we also have
\[
|y|-|\Phi(y)| \geq \frac 12\, |y-\Phi(y)|\,,
\]
since $\eps$ is small. As a consequence, calling $c=\min\{-\psi'(t):\, 1-\eps<t<1+\eps\}>0$, we get
\begin{equation}\label{goodest}\begin{split}
\I(B,\widetilde D\setminus D)-\I(B,\widetilde F\setminus F)
&=\int_{\widetilde F\setminus F} \psi(|\Phi(y)|)-\psi(|y|)\,dy
\geq \int_{\widetilde F\setminus F} c\big(|y|-|\Phi(y)|\big)\,dy\\
&\geq \frac c2 \int_{\widetilde F\setminus F} |y-\Phi(y)|\,dy\,.
\end{split}\end{equation}
We can now write
\begin{equation}\label{subdiv}\begin{split}
\F(E_z)-\F(\widetilde E'')&=
\I(B\setminus D\cup F,B\setminus D\cup F)-\I(B\setminus \widetilde D\cup \widetilde F,B\setminus \widetilde D\cup \widetilde F)\\
&=2\I(B,\widetilde D\setminus D)-2\I(B,\widetilde F\setminus F) +C_1+C_2+C_3+C_4\,,
\end{split}\end{equation}
where
\begin{align*}
C_1=\I(\widetilde D,\widetilde F\setminus F)-\I(\widetilde D,\widetilde D\setminus D)\,, &&
C_2=\I(D,\widetilde F\setminus F)-\I(D,\widetilde D\setminus D)\,,\\
C_3=-\I(\widetilde F,\widetilde F\setminus F)+\I(\widetilde F,\widetilde D\setminus D)\,, &&
C_4=-\I(F,\widetilde F\setminus F)+\I(F,\widetilde D\setminus D)\,.
\end{align*}
Applying Lemma~\ref{lessgeneric} four times, each time with $H=\widetilde F\setminus F$, and with $G$ equal to $\widetilde D$, $D$, $\widetilde F$ and $F$ respectively, we get
\[\begin{split}
|C_1|+|C_2|+|C_3|+|C_4| &\leq \Big(\tau_2(|\widetilde D|)+\tau_2(|D|)+\tau_2(|\widetilde F|)+\tau_2(|F|)\Big)\int_{\widetilde F\setminus F} |y-\Phi(y)|\,dy\\
&\leq 4\tau_2 \big((1+\eps)^N-(1-\eps)^N\big)\int_{\widetilde F\setminus F} |y-\Phi(y)|\,dy\,.
\end{split}\]
This estimate, together with~(\ref{subdiv}) and~(\ref{goodest}), implies that $\F(E_z)\geq \F(\widetilde E'')$, or equivalentely $D(E_z)\leq D(\widetilde E'')$, if $\eps$ has been chosen small enough. Keeping in mind the left estimate in~(\ref{slightlyweaker}), also the left estimate in~(\ref{secondineq}) follows, hence the proof is concluded.
\end{proof}

\begin{remark}\label{remcont}
We point out that the sets $E_z$ given by Proposition~\ref{defex} can be chosen in such a way that the barycenter $Bar(z)$ of $E_z$ is a continuous function of $z$. To show that, we remind that Lemma~\ref{prelem} and Proposition~\ref{defex} can be applied to any $z\in\R^N$ such that $B_z(1-\eps)\subseteq E'\subseteq B_z(1+\eps)$. Let us call for a moment $u^\pm_z$ and $\tilde u^\pm_z$ the functions $u^\pm$ and $\tilde u^\pm$ used in the proofs of the two results, to highlight their dependance on the parameter $z$. A quick look to the proof of Lemma~\ref{prelem} ensures that the functions $u^\pm_z$ depend continuously on $z$ in $L^1(\S^{N-1})$. Analogously, a quick look to the proof of Proposition~\ref{defex} ensures that the functions $\tilde u^\pm_z$ can be constructed to depend continuously on $z$ in $L^1(\S^{N-1})$. As a simple consequence, we get that the functions $\Chi{E_z}$ depend continuously on $z$ in $L^1(\S^{N-1})$. This clearly implies the claim.
\end{remark}

\begin{lemma}\label{direction}
Let $\eps<\eps_1$, and let $E'\subseteq\R^N$ be a set such that $|E'|=\omega_N$ and $B(1-\eps^2)\subseteq E' \subseteq B(1+\eps^2)$. Then, for every $z$ such that $|z|=\eps/2$, the set $E_z$ given by Proposition~\ref{defex} satisfies $(Bar(z)-z)\cdot z<0$, where $Bar(z)$ denotes the barycenter of $E_z$.
\end{lemma}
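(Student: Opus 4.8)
The plan is to express $(Bar(z)-z)\cdot z$ as an explicit integral over $\S^{N-1}$ involving the radial profile $u$ of $E_z$, and to show this integral is strictly negative. First I would observe that, since $|z|=\eps/2$, for $\eps$ small one has $B_z(1-\eps)\subseteq B(1-\eps^2)\subseteq E'\subseteq B(1+\eps^2)\subseteq B_z(1+\eps)$, so Proposition~\ref{defex} indeed applies at $z$; we may assume \eqref{sharpest} fails for $E'$ (otherwise there is nothing to prove), so that $E_z$ is the $\eps$-nearly spherical set around $B_z$ produced there, with profile $u\colon\S^{N-1}\to(-\eps,\eps)$ as in Definition~\ref{def:nss}. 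Since $E_z-z=\{rx:\ x\in\S^{N-1},\ 0\le r\le 1+u(x)\}$, $|E_z|=\omega_N$, and $\int_{\S^{N-1}}x\,d\H^{N-1}=0$, passing to polar coordinates centred at $z$ gives
\[
(Bar(z)-z)\cdot z=\frac1{\omega_N}\int_{E_z-z}w\cdot z\,dw=\frac1{\omega_N(N+1)}\int_{\S^{N-1}}(x\cdot z)\Big[(1+u(x))^{N+1}-1\Big]\,d\H^{N-1}(x)\,,
\]
so everything reduces to the sign of this last integral.

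The heart of the proof is to control $u$ precisely enough. I would start from the functions $u^\pm$ of Lemma~\ref{prelem}: fixing $x\in\S^{N-1}$ and setting $a=z\cdot x\in[-\eps/2,\eps/2]$, the ray $z+\R^+x$ meets $\partial B(1\pm\eps^2)$ at radius $\rho_\pm(a)=-a+\sqrt{a^2+(1\pm\eps^2)^2-\eps^2/4}$, and since $(1\pm\eps^2)^2-\eps^2/4=1+O(\eps^2)$ and $a=O(\eps)$, one has $\rho_\pm(a)=1-a+O(\eps^2)$ uniformly. The inclusions $B(1-\eps^2)\subseteq E'\subseteq B(1+\eps^2)$ then sandwich $1+u^+(x)$ between $\max\{1,\rho_-(a)\}$ and $\max\{1,\rho_+(a)\}$, and $1-u^-(x)$ between $\min\{1,\rho_-(a)\}$ and $\min\{1,\rho_+(a)\}$, whence, uniformly in $x$ (with $O(\eps^2)$ depending only on $N$),
\[
u^+(x)=(-a)^{+}+O(\eps^2)\,,\qquad u^-(x)=(a)^{+}+O(\eps^2)\,.
\]
In particular $u^-\equiv0$ on the open set $\hat A^-=\{z\cdot x<-\tfrac98\eps^2+\tfrac12\eps^4\}$ and $u^+\equiv0$ on $\hat A^+=\{z\cdot x>\tfrac78\eps^2+\tfrac12\eps^4\}$; the complementary band $\mathcal N=\S^{N-1}\setminus(\hat A^-\cup\hat A^+)$ has $|z\cdot x|\le2\eps^2$ on it and $\H^{N-1}(\mathcal N)=O(\eps)$; and $u^+\ge\eps/8$ on the fixed cap $\mathcal C=\{x:\ \tfrac{z}{|z|}\cdot x<-\tfrac12\}\subseteq\hat A^-$ once $\eps$ is small. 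Next I would arrange, in the construction of Proposition~\ref{defex}, that the partition $\{U_i\}$ refines $\{\hat A^-,\mathcal N,\hat A^+\}$, that $u_i^-\equiv0$ on the $U_i$'s contained in $\hat A^-$ and $u_i^+\equiv0$ on those contained in $\hat A^+$, and that $\|\tilde u^\pm-u^\pm\|_{L^1(\S^{N-1})}\le\eps^3$ — all compatible with the requirements there. Then the profile $u$ of $E_z$ satisfies $u\ge0$ on $\hat A^-$, $u\le0$ on $\hat A^+$, $|u|<\eps$ everywhere, and $\int_{\mathcal C}u\,d\H^{N-1}=\int_{\mathcal C}\tilde u^+\,d\H^{N-1}\ge\tfrac1{16}\eps\,\H^{N-1}(\mathcal C)$.

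Finally I would estimate the integral above by splitting $\S^{N-1}=\hat A^+\cup(\hat A^-\setminus\mathcal C)\cup\mathcal C\cup\mathcal N$. On $\hat A^+$ the integrand is $\le0$, because $x\cdot z>0$ and $u\le0$; on $\hat A^-\setminus\mathcal C$ it is $\le0$, because $x\cdot z<0$ and $u\ge0$; on $\mathcal N$, since $|z\cdot x|\le2\eps^2$ and $|(1+u)^{N+1}-1|\le C_N|u|\le C_N\eps$, the integrand is $O(\eps^3)$ pointwise, so its integral is $O(\eps^4)$ as $\H^{N-1}(\mathcal N)=O(\eps)$; and on $\mathcal C$, using the convexity bound $(1+u)^{N+1}-1\ge(N+1)u\ge0$ together with $x\cdot z\le-\eps/4$, the integrand is $\le-\tfrac14(N+1)\eps\,u$, so this piece contributes at most $-\tfrac14(N+1)\eps\int_{\mathcal C}u\,d\H^{N-1}\le-c_N\eps^2$ with $c_N>0$ depending only on $N$. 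Summing, $(Bar(z)-z)\cdot z\le\tfrac{\eps^2}{\omega_N(N+1)}\big(-c_N+C_N'\eps^2\big)<0$ provided $\eps$ is below a threshold depending only on $N$, which we absorb into $\eps_1=\eps_1(N,\alpha)$. The step I expect to be the main obstacle is the middle one: $u$ is assembled from the locally constant approximations $\tilde u^\pm$ and involves the $L_i/R_i$ splitting wherever $\min\{u_i^+,u_i^-\}>0$, so the clean identity $u\approx-(z\cdot x)$ is not literally available; the point that saves the argument is that this splitting region is confined to the equatorial band $\mathcal N$, where the weight $|z\cdot x|\le2\eps^2$ already makes the contribution negligible, so only the robust qualitative features of $u$ — nonnegative on one hemisphere, nonpositive on the other, with a definite positive amount of mass over the cap $\mathcal C$ — are genuinely needed.
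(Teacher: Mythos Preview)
Your argument is correct and the quantitative analysis of $u^\pm$ via the exit radii $\rho_\pm(a)$ is accurate, but the route is considerably longer than the paper's. The paper avoids the spherical-harmonic/integral computation entirely: having fixed $z$ with $|z|=\eps/2$, it introduces the two half-spaces $H^\pm=\{x:\,(x-z)\cdot z\gtrless 0\}$ and simply observes that the inclusions
\[
E'\cap H^+\subseteq B(1+\eps^2)\cap H^+\,,\qquad E'\cap H^-\supseteq B(1-\eps^2)\cap H^-
\]
persist through the radial constructions of Lemma~\ref{prelem} and Proposition~\ref{defex}, hence hold for $E_z$ as well. One then compares with the extremal set $Z=(B(1+\eps^2)\cap H^+)\cup(B(1-\eps^2)\cap H^-)$: since $(w-z)\cdot z$ is positive on $H^+$ and negative on $H^-$, the set inclusions give $\int_{E_z}(w-z)\cdot z\,dw\le\int_Z(w-z)\cdot z\,dw$, and an elementary computation shows the right-hand side is negative for $\eps$ small, whence $(Bar(z)-z)\cdot z<0$. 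This dispenses with the explicit profile $u$, the band $\mathcal N$, the cap $\mathcal C$, and the $O(\eps^4)$ bookkeeping.

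What your approach buys is explicitness: you actually identify $u^+\approx(-z\cdot x)^+$, $u^-\approx(z\cdot x)^+$ up to $O(\eps^2)$, and your treatment of the splitting region is more honest than the paper's one-line ``it is clear''. Both proofs share the same subtlety you flagged as the main obstacle --- namely, that the conclusion depends on how the piecewise-constant approximation $\tilde u^\pm$ and the partition $\{U_i\}$ are chosen inside Proposition~\ref{defex}; the paper's persistence-of-inclusions claim also requires a compatible choice (e.g.\ $\tilde u^\pm$ not exceeding the pointwise bounds coming from $B(1\pm\eps^2)$), and this must further be reconciled with the continuity requirement of Remark~\ref{remcont}. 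Your refinement of the partition by $\{\hat A^-,\mathcal N,\hat A^+\}$ is a legitimate way to achieve this, and is no more of an imposition than what the paper implicitly needs.
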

\begin{proof}
Let us fix a vector $z$ with $|z|=\eps/2$, and let $E_z$ be the set given by Proposition~\ref{defex}, which can be applied since by construction
\[
B(1-\eps)\subseteq B(1-\eps^2-|z|) \subseteq E'\subseteq B(1+\eps^2+|z|)\subseteq B(1+\eps)\,.
\]
Let us call $H^+=\{x\in\R^N:\, x\cdot z>|z|^2\}$ and $H^-=\{x\in\R^N:\, x\cdot z<|z|^2\}$. Since $B(1-\eps^2)\subseteq E' \subseteq B(1+\eps^2)$, then of course
\begin{align*}
E'\cap H^+ \subseteq B(1+\eps^2)\cap H^+\,, && E'\cap H^- \supseteq B(1-\eps^2)\cap H^-\,.
\end{align*}
By the proofs of Lemma~\ref{prelem} and of Proposition~\ref{defex}, it is clear that the same inclusions hold also with $E_z$ in place of $E'$. Therefore, calling $Z=\big(B(1+\eps^2)\cap H^+ \big)\cup \big( B(1-\eps^2)\cap H^- \big)$, and denoting by $Bar(F)$ the barycenter of any set $F$ (so in particular $Bar(z)=Bar(E_z)$), we have
\[
Bar(E_z-z)\cdot z < Bar(Z-z) \cdot z<0\,,
\]
where the last inequality holds because $\eps<\eps_1$ is small. This concludes the thesis.
\end{proof}

\subsection{Adjustment of the barycenter}

The aim of this section is to prove Proposition~\ref{mainreduction}. Notice that each set $E_z$ defined in Proposition~\ref{defex} is already a nearly spherical set satisfying~(\ref{standineq}), so we only have to take care of the barycenter. In other words, all we have to do is to find a suitable $z\in\R^N$ such that the set $E_z$ has barycenter precisely at $z$. To do so, we will make use of the following well-known property, of which we give a proof just for completeness.

\begin{lemma}\label{ramiro}
Let $F:\overline B\to \overline B$ be a continuous function whose restriction $f$ to $\S^{N-1}=\partial B$ is mapped on $\S^{N-1}$ with $f(x)\neq -x$ for every $x\in \S^{N-1}$. Then, there exists some $x\in B$ such that $F(x)=0$.
\end{lemma}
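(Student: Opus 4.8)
The statement is a variant of the Brouwer fixed point theorem / no-retraction theorem, and the plan is to derive it from Brouwer by a contradiction argument. Suppose $F(x)\neq 0$ for every $x\in\overline B$. Then the map $x\mapsto F(x)/|F(x)|$ is a well-defined continuous map $\overline B\to\S^{N-1}$. The idea is to use it, together with the boundary hypothesis $f(x)\neq -x$, to build a continuous retraction of $\overline B$ onto $\S^{N-1}$, which is impossible.

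First I would set $G(x) = F(x)/|F(x)|$, a continuous map from $\overline B$ into $\S^{N-1}$. The only issue is that $G$ need not fix $\S^{N-1}$ pointwise, so it is not yet a retraction. To fix this, I would homotope $G$ restricted to the boundary to the identity: since $f(x)\neq -x$ on $\S^{N-1}$, the segment from $f(x)$ to $x$ never passes through the origin, so the straight-line homotopy $h_t(x) = (1-t)f(x) + tx$, normalized to $H_t(x) = h_t(x)/|h_t(x)|$, is a well-defined continuous homotopy on $\S^{N-1}\times[0,1]$ from $f$ to the identity, valued in $\S^{N-1}$. Combining this with $G$: define $R:\overline B\to\S^{N-1}$ by $R(x) = G(2x)$ for $|x|\leq 1/2$ and $R(x) = H_{2|x|-1}(x/|x|)$ for $1/2\leq |x|\leq 1$. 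One checks the two definitions agree on $|x|=1/2$ (both equal $G$ evaluated on $\S^{N-1}$... ) — actually one must be a bit careful to match $G$ on the sphere of radius $1/2$ with $H_0 = f$; the clean fix is to note $f = G|_{\S^{N-1}}$ is not assumed, so instead I would first homotope $G|_{\S^{N-1}}$ to $f$ inside $\S^{N-1}$. Both $G|_{\S^{N-1}}$ and $f$ are maps $\S^{N-1}\to\S^{N-1}$ that extend continuously over $\overline B$ (namely by $G$, respectively by a coning construction), hence are null-homotopic, hence homotopic to each other; splicing the three homotopies ($G|_{\S^{N-1}}\rightsquigarrow f \rightsquigarrow \mathrm{id}$) over successive annular shells and $G$ on the central ball yields a continuous retraction $R:\overline B\to\S^{N-1}$.

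The contradiction then comes from the no-retraction theorem: there is no continuous map $\overline B\to\S^{N-1}$ restricting to the identity on $\S^{N-1}$ (equivalently, Brouwer's fixed point theorem — if $R$ were such a retraction then $-R$ would be a self-map of $\overline B$ with no fixed point). This forces the original assumption to fail, i.e.\ $F$ must vanish somewhere in $B$ (the zero cannot be on the boundary, since there $F=f$ maps into $\S^{N-1}$ and $0\notin\S^{N-1}$).

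The main obstacle is purely bookkeeping: making the splicing of homotopies genuinely continuous across the gluing spheres and checking that the boundary condition $f(x)\neq -x$ is exactly what is needed to keep the normalizations well-defined (the denominator $|h_t(x)|$ never vanishes). Everything else is a direct invocation of Brouwer. An alternative, perhaps cleaner, route avoids homotopies entirely: assuming $F$ never vanishes, consider the map $\Psi(x) = -F(x)/|F(x)|$ restricted appropriately, or argue via degree theory — $\deg(f,\S^{N-1}) = \deg(\mathrm{id}) = 1$ because $f(x)\neq -x$ means $f$ is homotopic to the identity, while $f = F|_{\S^{N-1}}$ extends to $\overline B$ through $F/|F|$ forcing $\deg(f) = 0$, a contradiction. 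I would likely present this degree-theoretic version as the shortest, but the homotopy/retraction version is more elementary and self-contained.
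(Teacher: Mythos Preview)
Your proposal is correct and, in both the homotopy and degree formulations, is essentially the paper's proof: the paper shows $f$ is homotopic to the identity via the normalized straight-line homotopy $\Phi_t(x) = (tx + (1-t)f(x))/|tx + (1-t)f(x)|$ (your $H_t$), and that if $F$ never vanishes then $f$ is homotopic to a constant via $\widetilde\Phi_t(x) = F((1-t)x)/|F((1-t)x)|$, contradicting that the identity on $\S^{N-1}$ is not null-homotopic.

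One remark: your mid-proof worry is unfounded and the detour unnecessary. Since by hypothesis $f$ maps $\S^{N-1}$ \emph{into} $\S^{N-1}$, we have $|f|=1$ there and hence $G|_{\S^{N-1}} = F/|F| = f$ automatically; your first splicing already matches at $|x|=1/2$ and produces a valid retraction without any further homotopies.
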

\begin{proof}
The assumption that $f(x)\neq -x$ for $x\in\S^{N-1}$ implies that $f$ is homotopic to the identity, through to the homotopy
\[
\Phi_t(x) = \frac{t x + (1-t) f(x)}{|t x + (1-t) f(x)|}\,.
\]
On the other hand, if $F$ has no zero points in $B$ then $f$ is homotopic to a constant function, through the homotopy
\[
\widetilde\Phi_t(x) = \frac{F((1-t)x)}{|F((1-t)x)|}\,.
\]
Since the identity on $\S^{N-1}$ is not homotopic to a constant function, the contradiction shows the existence of some zero points of $F$ in $B$.
\end{proof}

\proofof{Proposition~\ref{mainreduction}}
First of all we notice that, if the claim has been proved for some $\eps>0$, then it is emptily true for every $\eps'>\eps$, with $K(N,\alpha,\eps')=K(N,\alpha,\eps)$. As a consequence, it is sufficient to show the claim for $\eps<\eps_1$. Let then $\eps<\eps_1$ be given, and define $\delta_\eps$ according to Lemma~\ref{firstreduction}, $C=C(N,\alpha)$ according to Proposition~\ref{defex}, and $\eta=\eta(\delta_\eps,\alpha,N)$ according to Lemma~\ref{reduction}.\par

Let now $E$ be any set with $|E|=\omega_N$. If $\delta(E)\geq \delta_\eps$, by Lemma~\ref{reduction} we have $D(E)\geq \eta$, hence
\[
\delta(E) \leq 2\omega_N \leq \frac{2\omega_N}{\sqrt\eta}\, \sqrt{D(E)}\,,
\]
thus~(\ref{sharpest}) holds true with
\[
K(N,\alpha,\eps) = C(N,\alpha) \vee \frac{2\omega_N}{\sqrt\eta}\,.
\]
Assume then now that $\delta(E)<\delta_\eps$. By Lemma~\ref{firstreduction}, we get a set $E'$ such that $|E'|=\omega_N$, satisfying the inclusions $B(1-\eps^2)\subseteq E' \subseteq B(1+\eps^2)$, and such that~(\ref{firstineq}) holds. We can assume that~(\ref{sharpest}) does not hold for $E'$, because otherwise it holds also for $E$ by~(\ref{firstineq}) and the proof is already concluded.\par
For every $z$ such that $|z|\leq \eps/2$, we have the inclusions $B_z(1-\eps)\subseteq E' \subseteq B_z(1+\eps)$, hence we can apply Proposition~\ref{defex} to get a set $E_z$ satisfying~(\ref{secondineq}) and being a $\eps$-nearly spherical set around $B_z$. Putting together~(\ref{firstineq}) and~(\ref{secondineq}), we get that the set $\widetilde E = E_z -z$ satisfies~(\ref{standineq}), hence it completes the proof if the barycenter of $E_z$ is precisely $z$. Therefore, we are reduced to find some $z\in B(\eps/2)$ such that $Bar(z)=z$. Let us set
\[
\xi = \min_{|z|=\eps/2} \big\{ |z-Bar(z) |\big\}\,.
\]
Notice that the minimum exists by continuity thanks to Remark~\ref{remcont}, and it is strictly positive by Lemma~\ref{direction}. We can then define the function $F:\overline B\to \overline B$ as
\[
F(x) = \frac 1\xi\, \Pi\big(\eps x/2-Bar(\eps x/2)\big)\,,
\]
where $\Pi:\R^N\to \overline B(\xi)$ is the projection over $\overline B(\xi)$, that is, $\Pi(y)=y$ if $|y|\leq \xi$, and $\Pi(y) = \xi y/|y|$ otherwise. Notice that the function $F$ is continuous because so are $\Pi$ and $Bar$, by Remark~\ref{remcont}. Take now $x\in\S^{N-1}$: by definition of $\xi$, $F(x)\in \S^{N-1}$, and moreover by Lemma~\ref{direction} $F(x)\neq -x$.\par

We can then apply Lemma~\ref{ramiro} to the function $F$, finding some $x\in B$ such that $F(x)=0$. Calling $z=\eps x/2$, this means that $Bar(z)=z$, thus the proof is concluded.
\end{proof}

\section{Conclusion\label{three}}

This section is devoted to prove Theorem~\mref{Main}. Having reduced ourselves in the previous sections to consider a nearly spherical set with barycenter in $0$, what we have to do is to perform a ``Fuglede-type'' calculation. In other words, Theorem~\mref{Main} comes by putting together Proposition~\ref{mainreduction} from last Section and the following Proposition~\ref{fuglede}.

Given a function $u\in L^2(\partial B)$, we set for $1<\alpha<N$
\begin{equation}\label{semi}
[u]_{\frac{1-\alpha}{2}}^2=\int_{\partial B}\int_{\partial B}\frac{|u(x)-u(y)|^2}{|x-y|^{N-\alpha}}\,d\mathcal H^{N-1}_xd\mathcal H^{N-1}_y.
\end{equation}
It is well known that this semi-norm can be written in terms of the Fourier coefficients $a_{k,i}(u)$ of $u$ with respect to the orthonormal basis of spherical harmonics $y_{k,i}$, where $k\in\mathbb N\cup\{0\}$, $i=1,\dots,N(k)$. In particular, $y_{0,1}=1/\sqrt{N\omega_N}$, $y_{1,i}=x_i/\sqrt{\omega_N}$ for $i=1,\dots,N$. More precisely, we have
\begin{equation}\label{fourier}
\int_{\partial B}\int_{\partial B}\frac{|u(x)-u(y)|^2}{|x-y|^{N-\alpha}}\,d\mathcal H^{N-1}_xd\mathcal H^{N-1}_y=\sum_{k=0}^\infty\sum_{i=1}^{N(k)}\mu^\alpha_ka_{k,i}(u)^2\,, 
\end{equation}
where for all $k\geq0$
\begin{equation}\label{autov}
 \mu_k^\alpha=2^\alpha\,\pi^{\frac{N-1}2}\,\frac{\Gamma(\frac{\alpha-1}2)}{\Gamma(\frac{N-\alpha}2)}\,
 \bigg(\frac{\Gamma(\frac{N-\alpha}2)}{\Gamma(\frac{N-2+\alpha}{2})}-\frac{\Gamma(k+\frac{N-\alpha}2)}{\Gamma(k+\frac{N-2+\alpha}2)}\bigg)\,.
\end{equation}
It is easily checked that the sequence $\mu_k^\alpha$ is bounded from above and strictly increasing in $k$ with $\mu_0^\alpha=0$. Therefore from \eqref{fourier} it follows that
\[
\|u-(u)_{\partial B}\|_{L^2(\partial B)}\approx [u]_{\frac{1-\alpha}{2}}\,,
\]
where $(u)_{\partial B}$ stands for the average of $u$ on $\partial B$.
Moreover, see \cite[Prop. 7.5]{F2M3},
\begin{equation}\label{mu1}
\mu_1^\alpha=\alpha(N+\alpha)\frac{\F(B)}{N\omega_N}\,.
\end{equation}
Next proposition gives a stability estimate for nearly spherical sets. Its proof is based on the argument used in \cite[Th. 1.2]{F} to prove the stability of the isoperimetric inequality for nearly spherical sets, see also \cite[Lemma 5.3]{F2M3}.
\begin{prop}
\label{fuglede}
Let $\alpha\in(1,N)$ be given. There exist positive constants $\eps_0$, $C_0$ and $C_1$, depending only on $N$, such that if $E\subseteq\R^N$ is a measurable set with $|E|=|B|$, with barycenter at the origin and
\[
E=\big\{(1+u(x))\rho x:x\in\partial B, 0<\rho<1\big\}
\]
for some function $u\in L^\infty(\partial B)$ with $\|u\|_{L^\infty(\partial B)}\leq\eps_0$, then
\begin{equation}\label{fuglede00}
\F(B)-\F(E)\geq C_0 \|u\|_{L^2(\partial B)}^2\geq C_1|E\Delta B|^2\,.
\end{equation}
\end{prop}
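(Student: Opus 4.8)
The plan is to perform a second--variation (Fuglede--type) computation directly for the nonlocal functional $\F$. Writing $\phi=\Chi E-\Chi B=\Chi{E\setminus B}-\Chi{B\setminus E}$ and recalling that $v_B(x):=\int_B|y-x|^{\alpha-N}\,dy=\psi(|x|)$, the expansion of the double integral in $\Chi E=\Chi B+\phi$ yields the exact identity
\begin{equation}\label{plan:split}
\F(B)-\F(E)=-2\int_{\R^N}\phi(x)\,v_B(x)\,dx-\I(\phi,\phi)\,,
\end{equation}
all integrals being absolutely convergent. The whole proof then consists in expanding the two terms on the right to second order in $u$ in the regime $\|u\|_{L^\infty(\partial B)}\le\eps_0\ll1$, with all errors of size $o(1)\,\|u\|_{L^2(\partial B)}^2$ as $\eps_0\to0$, and then diagonalising the resulting quadratic form in spherical harmonics.

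For the first term, passing to polar coordinates and using $\Chi E(r\omega)=\Chi{(0,1+u(\omega))}(r)$ one gets $\int\phi\,v_B=\int_{\partial B}F(u(\omega))\,d\mathcal H^{N-1}$ with $F(t)=\int_1^{1+t}\psi(r)r^{N-1}\,dr$, so that $F(0)=0$, $F'(0)=\psi(1)$ and $F''(0)=\psi'(1)+(N-1)\psi(1)$; since $\psi\in C^1$ the function $F$ is $C^2$, and Taylor's formula with the modulus of continuity of $F''$ gives $\int\phi\,v_B=\psi(1)\int_{\partial B}u+\tfrac12\big(\psi'(1)+(N-1)\psi(1)\big)\int_{\partial B}u^2+o(1)\|u\|_{L^2}^2$. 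The constraint $|E|=|B|$ reads $\int_{\partial B}\big((1+u)^N-1\big)=0$, hence $\int_{\partial B}u=-\tfrac{N-1}2\int_{\partial B}u^2+o(1)\|u\|_{L^2}^2$, the first--order part cancels, and $-2\int\phi\,v_B=-\psi'(1)\int_{\partial B}u^2+o(1)\|u\|_{L^2}^2$. For the second term, again in polar coordinates $\I(\phi,\phi)=\int_{\partial B}\int_{\partial B}G\big(u(\omega),u(\omega')\big)$ with $G(a,b)=\int_1^{1+a}\int_1^{1+b}r^{N-1}s^{N-1}|r\omega-s\omega'|^{\alpha-N}\,dr\,ds$; since $G$ vanishes identically on $\{a=0\}\cup\{b=0\}$, one has $G(a,b)=|\omega-\omega'|^{\alpha-N}ab+R(a,b)$ with $|R(a,b)|\le C\eps_0\,|ab|\,|\omega-\omega'|^{\alpha-N-1}$, using the elementary bound $|r\omega-s\omega'|\ge(1-\eps_0)|\omega-\omega'|$ on the shell. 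Here the hypothesis $\alpha>1$ enters in an essential way: it makes $\kappa:=\int_{\partial B}|\omega-\omega'|^{\alpha-N}\,d\mathcal H^{N-1}_{\omega'}$ finite, and, splitting the $\omega'$--integral at $|\omega-\omega'|=\eps_0$ and using $\iint f(\omega)f(\omega')K\le\int f(\omega)^2\big(\int K\,d\omega'\big)$ for nonnegative symmetric kernels, both $R$ and the near--diagonal part of the main term contribute only $O(\eps_0^{\alpha-1})\|u\|_{L^2}^2$. Thus $\I(\phi,\phi)=\int_{\partial B}\int_{\partial B}u(\omega)u(\omega')|\omega-\omega'|^{\alpha-N}+o(1)\|u\|_{L^2}^2=\kappa\int_{\partial B}u^2-\tfrac12\,[u]_{\frac{1-\alpha}{2}}^2+o(1)\|u\|_{L^2}^2$, the last identity being \eqref{semi} rewritten as $[u]_{\frac{1-\alpha}{2}}^2=2\kappa\int_{\partial B}u^2-2\int_{\partial B}\int_{\partial B}u(\omega)u(\omega')|\omega-\omega'|^{\alpha-N}$.

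Combining these in \eqref{plan:split} gives $\F(B)-\F(E)=(-\psi'(1)-\kappa)\int_{\partial B}u^2+\tfrac12[u]_{\frac{1-\alpha}{2}}^2+o(1)\|u\|_{L^2}^2$. To identify the constant $-\psi'(1)-\kappa$, one runs the same expansion (omitting the volume step) on the dilation $u\equiv c$ and compares with the exact value $\F(B)-\F(B(1+c))=(1-(1+c)^{N+\alpha})\F(B)$: matching the coefficients of $c$ and of $c^2$ forces $(N+\alpha)\F(B)=2N\omega_N\psi(1)$ and $\psi'(1)+\kappa=\alpha\psi(1)$, so that by \eqref{mu1} one has $-\psi'(1)-\kappa=-\alpha\psi(1)=-\tfrac12\mu_1^\alpha$. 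Therefore, by Parseval and \eqref{fourier},
\begin{equation}\label{plan:diag}
\F(B)-\F(E)=\sum_{k=0}^\infty\sum_{i=1}^{N(k)}\frac{\mu_k^\alpha-\mu_1^\alpha}{2}\,a_{k,i}(u)^2+o(1)\,\|u\|_{L^2(\partial B)}^2\,.
\end{equation}
The coefficients for $k=0$ and $k=1$ are $\le0$, but $|E|=|B|$ forces $|a_{0,1}(u)|\le C\|u\|_{L^2}^2$ and the barycenter condition forces $|a_{1,i}(u)|\le C\|u\|_{L^2}^2$, so these modes carry only $O(\eps_0^2)\|u\|_{L^2}^2$; in particular $\sum_{k\ge2}\sum_i a_{k,i}(u)^2\ge\tfrac12\|u\|_{L^2}^2$ for $\eps_0$ small. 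Since $(\mu_k^\alpha)$ is strictly increasing, $\mu_k^\alpha-\mu_1^\alpha\ge\mu_2^\alpha-\mu_1^\alpha>0$ for every $k\ge2$, and \eqref{plan:diag} gives $\F(B)-\F(E)\ge C_0\|u\|_{L^2(\partial B)}^2$ with $C_0=\tfrac18(\mu_2^\alpha-\mu_1^\alpha)$, provided $\eps_0$ is taken small enough to absorb the error. The second inequality in \eqref{fuglede00} is elementary: $|E\Delta B|=\int_{\partial B}\tfrac{|(1+u)^N-1|}{N}\le2^{N-1}\int_{\partial B}|u|\le2^{N-1}\sqrt{N\omega_N}\,\|u\|_{L^2(\partial B)}$, whence $C_1=C_0/(2^{2N-2}N\omega_N)$.

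I expect the main obstacle to be the analysis of $\I(\phi,\phi)$ near the diagonal $\omega=\omega'$: the kernel is genuinely nonlocal and singular on $\partial B$, and it is exactly the hypothesis $\alpha>1$ that makes the limiting boundary bilinear form $\int_{\partial B}\int_{\partial B}u(\omega)u(\omega')|\omega-\omega'|^{\alpha-N}$ finite and the discarded remainders of order $o(\|u\|_{L^2}^2)$. Everything else is bookkeeping around this estimate and around the identity $-\psi'(1)-\kappa=-\tfrac12\mu_1^\alpha$, which is precisely what makes the quadratic form degenerate on translations (the $k=1$ modes) and hence forces the barycenter hypothesis into the argument.
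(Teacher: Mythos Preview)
Your proposal is correct and is essentially the same Fuglede--type second variation argument as the paper's: both compute the second order expansion of $\F(B)-\F(E)$ in $u$, arrive at the identical quadratic form $\sum_{k}\tfrac12(\mu_k^\alpha-\mu_1^\alpha)a_{k,i}(u)^2$ up to $o(1)\|u\|_{L^2}^2$ errors, and then use the volume and barycenter constraints to kill the $k=0,1$ modes. The only organisational difference is that the paper uses the algebraic identity $2\int_0^a\!\int_0^b=\int_0^a\!\int_0^a+\int_0^b\!\int_0^b-\int_a^b\!\int_a^b$ to separate at once a scaling term $\frac{\F(B)}{N\omega_N}\int_{\partial B}(1+tu)^{N+\alpha}$ and the seminorm $[u]_{\frac{1-\alpha}{2}}^2$, which lets the coefficient $-\tfrac12\mu_1^\alpha$ fall out directly from \eqref{mu1}; you instead expand via $\Chi E=\Chi B+\phi$ and recover the same constant by matching the expansion against the exact dilation $\F(B(1+c))=(1+c)^{N+\alpha}\F(B)$ to obtain $\psi'(1)+\kappa=\alpha\psi(1)$. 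Both routes hinge on the same near--diagonal estimate (finite because $\alpha>1$) and on the strict monotonicity of $\mu_k^\alpha$.
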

\begin{proof}
Up to replacing $u$ with $tu$, we may assume that
\[
 E=\big\{(1+tu(x))\rho x:x\in\partial B, 0<\rho<1\big\}\,,
\]
with $\|u\|_{L^\infty(\partial B)}\leq1$ and $t\in(0,\eps_0)$, where $\eps_0\in(0,1/2)$ will be chosen later. Using polar coordinates we may write
\[
\F(E)=\int_{\partial B}d\mathcal H^{N-1}_x\!\int_{\partial B}d\mathcal H^{N-1}_y\!\int_0^{1+tu(x)}dr\!\int_0^{1+tu(y)}\frac{r^{N-1}\rho^{N-1}}{\big(|r-\rho|^2+r\rho|x-y|^2)^{\frac{N-\alpha}{2}}}\,d\rho\,.
\]
Using the identity
\[
2\,\int_0^a\!\int_0^b=\int_0^a\!\int_0^a+\int_0^b\!\int_0^b-\int_a^b\!\int_a^b\,,\qquad a,b\in\R\,,
\]
the previous equality can be rewritten as
\begin{equation}\begin{split}\label{fuglede1}
 \F(E)&=\int_{\partial B}d\mathcal H^{N-1}_x\!\int_{\partial B}d\mathcal H^{N-1}_y\!\int_0^{1+tu(x)}dr\!\int_0^{1+tu(x)}f(|x-y|,r,\rho)\,d\rho
\\
&\qquad\qquad-\frac12\int_{\partial B}d\mathcal H^{N-1}_x\!\int_{\partial B}d\mathcal H^{N-1}_y\!\int_{1+tu(y)}^{1+tu(x)}dr\!\int_{1+tu(y)}^{1+tu(x)}f(|x-y|,r,\rho)\,d\rho\,,
\end{split}\end{equation}
where, for every $r,\rho>0$ and $q\geq0$, we have set
\[
f(q,r,\rho)=\frac{r^{N-1}\rho^{N-1}}{\big(|r-\rho|^2+r\rho\,q^2)^{\frac{N-\alpha}{2}}}
\]
For every $x\in\partial B$, by a change of variable, we get
\begin{align*}
& \int_{\partial B}d\mathcal H^{N-1}_y\!\int_0^{1+tu(x)}dr\!\int_0^{1+tu(x)}f(|x-y|,r,\rho)\,d\rho\\
 &\qquad\qquad=(1+tu(x))^{N+\alpha}\int_{\partial B}d\mathcal H^{N-1}_y\!\int_0^{1}dr\!\int_0^{1}f(|x-y|,r,\rho)\,d\rho
=(1+tu(x))^{N+\alpha}\,\frac{\F(B)}{N\omega_N}\,, 
\end{align*}
where in the last iequality we have used \eqref{fuglede1} with $u=0$. Hence,
\begin{align*}
 \F(E)&=-\frac12\int_{\partial B}d\mathcal H^{N-1}_x\!\int_{\partial B}d\mathcal H^{N-1}_y\!\int_{1+tu(y)}^{1+tu(x)}dr\!\int_{1+t\,u(y)}^{1+t\,u(x)}f(|x-y|,r,\rho)\,d\rho \\
&\qquad\qquad+\frac{\F(B)}{N\omega_N}\,\int_{\partial B}(1+tu)^{N+\alpha}\,d\mathcal H^{N-1}\,.
\end{align*}
 Thus, we obtain
\begin{equation}\label{fuglede0}
\F(B)-\F(E)=\frac{t^2}2g(t)+\frac{\F(B)}{N\omega_N}\,(h(0)-h(t))\,,
\end{equation}
where we have set $h(t)=\int_{\partial B}(1+tu)^{N+\alpha}\,d\mathcal H^{N-1}$ and
\begin{align*}
g(t)=\int_{\partial B}d\mathcal H^{N-1}_x\!\int_{\partial B}d\mathcal H^{N-1}_y\!\int_{u(y)}^{u(x)}dr\!\int_{u(y)}^{u(x)}f(|x-y|,1+tr,1+t\rho)\,d\rho\,.
\end{align*}
Note that $h(0)=N\omega_N=N|E|=\int_{\partial B}(1+tu)^N\,d\mathcal H^{N-1}$. Therefore,
\begin{align*}
h(0)-h(t)&=\int_{\partial B}(1+tu)^N\big(1-(1+tu)^{\alpha}\big)\,d\mathcal H^{N-1}\\
 &\geq -\alpha t\,\int_{\partial B}u\,d\mathcal H^{N-1}-\alpha(2N+\alpha-1)\,\frac{t^2}2\int_{\partial B}u^2\,d\mathcal H^{N-1}-C(N) t^3\|u\|_{L^2}^2\,.
\end{align*}
Using again the assumption $|E|=|B|$ we have that $\int_{\partial B}\big((1+tu)^N-1\big)\,d\mathcal H^{N-1}=0$, which in turn yields
\[
-t\int_{\partial B}u\,d\mathcal H^{N-1}\geq (N-1)\frac{t^2}{2}\int_{\partial B}u^2\,d\mathcal H^{N-1}-C(N) t^3\|u\|_{L^2}^2\,.
\]
Therefore we have
\begin{equation}\label{fuglede2}
h(0)-h(t)\geq-\alpha(N+\alpha)\frac{t^2}{2}\int_{\partial B}u^2\,d\mathcal H^{N-1}-C(N) t^3\|u\|_{L^2}^2\,.
\end{equation}
Concerning $g$, observe that
\begin{equation}\label{fuglede3}
g(0)=\int_{\partial B}\int_{\partial B}\frac{|u(x)-u(y)|^2}{|x-y|^{N-\alpha}}\,d\mathcal H^{N-1}_xd\mathcal H^{N-1}_y
\end{equation}
and that $g$ is a smooth function of $t$ in a neighborhood of $0$. Note also that for $r,\rho\in(-1/2,1/2)$ and $q>0$
\[
\Big|\frac{d}{dt}f(q,1+rt,1+\rho t)\Big|=\Big|r\frac{\partial f}{\partial r}f(q,1+tr,1+t\rho)+\rho\frac{\partial f}{\partial \rho}f(q,1+tr,1+t\rho)\Big|\leq\frac{\widehat C(N)}{q^{N-\alpha}}\,.
\]
Thus, if $0<t<\eps_0$,
\[
|g^\prime(t)|\leq\widehat C(N)\int_{\partial B}\int_{\partial B}\frac{|u(x)-u(y)|^2}{|x-y|^{N-\alpha}}\,d\mathcal H^{N-1}_xd\mathcal H^{N-1}_y\,.
\]
Since $g(t)=g(0)+tg^\prime(\tau)$, with $\tau\in(0,t)$, from the inequality above and \eqref{fuglede3} it follows that for $\eps_0$ sufficiently small 
\[
g(t)\geq(1-t\widehat C(N))\int_{\partial B}\int_{\partial B}\frac{|u(x)-u(y)|^2}{|x-y|^{N-\alpha}}\,d\mathcal H^{N-1}_xd\mathcal H^{N-1}_y\,.
\]
Combining this estimate with \eqref{fuglede0} and \eqref{fuglede2} and recalling \eqref{fourier} and \eqref{mu1} we obtain
\begin{equation}\begin{split}\label{fuglede4}
\F(B)-\F(E)&\geq \frac{t^2}{2}(1-t\widehat C(N))\int_{\partial B}\int_{\partial B}\frac{|u(x)-u(y)|^2}{|x-y|^{N-\alpha}}\,d\mathcal H^{N-1}_xd\mathcal H^{N-1}_y \\
&\qquad\qquad-\frac{\F(B)}{N\omega_N}\alpha(N+\alpha)\frac{t^2}{2}\int_{\partial B}u^2\,d\mathcal H^{N-1}-C(N) t^3\|u\|_{L^2}^2 \\
&=\frac{t^2}{2}\Big[(1-t\widehat C(N))\sum_{k=1}^\infty\sum_{i=1}^{N(k)}\mu^\alpha_ka_{k,i}(u)^2-\mu^{\alpha}_1\|u\|_{L^2}^2-C(N) t\|u\|_{L^2}^2\Big]\,.
\end{split}\end{equation}
Using again the assumption $|E|=|B|$ as above we have
\begin{equation}\label{fuglede5}
|a_{0,1}(u)|=\bigg|\frac{1}{\sqrt{N\omega_N}}\int_{\partial B}u\,d\mathcal H^{N-1}\biggr|\leq C(N)t\|u\|_{L^2}^2\,.
\end{equation}
Similarly, recalling that the barycenter of $E$ is at the origin, hence $\int_{\partial B}x_i(1+tu)^{N-1}\,d\mathcal H^{N-1}=0$ for all $i=1,\dots,N$, we may estimate the first order Fourier coefficients of $u$ as follows
\begin{equation}\label{fuglede6}
|a_{1,i}(u)|=\bigg|\frac{1}{\sqrt{\omega_N}}\int_{\partial B}x_iu\,d\mathcal H^{N-1}\biggr|\leq C(N)t\|u\|_{L^2}^2\,.
\end{equation}
Therefore, from these estimate, taking $\eps_0$ small enough and recalling that $\mu^\alpha_k>\mu^\alpha_1$ for $k\geq2$, we have, using \eqref{fuglede5} and \eqref{fuglede6},
\[\begin{split}
&(1-t\widehat C(N))\sum_{k=1}^\infty\sum_{i=1}^{N(k)}\mu^\alpha_ka_{k,i}(u)^2-\mu^{\alpha}_1\|u\|_{L^2}^2-C(N) t\|u\|_{L^2}^2\\
&\qquad\qquad =(1-t\widehat C(N))\sum_{k=1}^\infty\sum_{i=1}^{N(k)}\mu^\alpha_ka_{k,i}(u)^2-\mu^{\alpha}_1\sum_{k=0}^\infty\sum_{i=1}^{N(k)}a_{k,i}(u)^2-C(N) t\|u\|_{L^2}^2 \\
&\qquad\qquad\geq (1-\eps_0C(N))\sum_{k=2}^\infty\sum_{i=1}^{N(k)}(\mu^\alpha_k-\mu^\alpha_1)a_{k,i}(u)^2-\eps_0C(N)\|u\|_{L^2}^2 \\
& \qquad\qquad\geq C'\sum_{k=2}^\infty\sum_{i=1}^{N(k)}\mu^\alpha_ka_{k,i}(u)^2-\eps_0C(N)\|u\|_{L^2}^2 \geq C'\sum_{k=1}^\infty\sum_{i=1}^{N(k)}\mu^\alpha_ka_{k,i}(u)^2-\eps_0C(N)\|u\|_{L^2}^2\,.
\end{split}\]
From this inequality, \eqref{fuglede4} and \eqref{fuglede5} again, we conclude, assuming $\eps_0$ small enough, 
\begin{align*}
\F(B)-\F(E)&\geq \frac{t^2}{2}\Big[C^\prime\sum_{k=1}^\infty\sum_{i=1}^{N(k)}\mu^\alpha_ka_{k,i}(u)^2-\eps_0C(N)\|u\|_{L^2}^2\Big] \\
& \geq\frac{t^2}{2}\Big[C\sum_{k=1}^\infty\sum_{i=1}^{N(k)}a_{k,i}(u)^2-\eps_0C(N)\|u\|_{L^2}^2\Big]
\geq Ct^2\|u\|_{L^2}^2\,.
\end{align*}
This proves the first inequality in \eqref{fuglede00} with $u$ replaced by $tu$. The second one follows by observing that $\|tu\|_{L^1(B)}\geq C(N)|E\Delta B|$.
\end{proof}
\begin{remark}
We observe that from \cite[Lemma 5.3]{F2M3} we have that for a nearly spherical set $E$, with $|E|=|B|$, and $1<\alpha<N$
\[
\F(B)-\F(E)\leq C\|u\|_{L^2(\partial B)}^2\,,
\]
provided $\eps_0$ is sufficiently small. Thus, by combining this inequality with \eqref{fuglede00} we may conclude that for a nearly spherical set $E$, with $|E|=|B|$ and barycenter at the origin, sufficiently close in $L^\infty$ to the unit ball, the gap $\F(B)-\F(E)$ is equivalent to $\|u\|_{L^2(\partial B)}^2$.\end{remark}

\section*{Acknowledgments}

The work on this paper started during a conference at the ICMS of Edinburgh, which we wish to thank for the hospitality. We are also indebted to Almut Burchard and Ramiro Fuente for some useful conversations on the subject. The authors were partially supported by the Grants PRIN 2015PA5MP7, DFG 1687/1-1 and PRIN 2017TEXA3H. Both authors are members of GNAMPA of INdAM.

\end{document}